%&latex
\documentclass[11pt,english]{amsart}
\usepackage[T1]{fontenc}
\usepackage[utf8]{inputenc}
\usepackage[letterpaper]{geometry}
\geometry{lmargin=1.3in,rmargin=1.3in, top=1.3in, bottom=1.3in}
\usepackage{amsmath,amsfonts,bm,graphicx,amssymb}
%\synctex=1
\usepackage{color}

\makeatletter
%%%%%%%%%%%%%%%%%%%%%%%%%%%%%% User specified LaTeX commands.

\newtheorem{theorem}{Theorem}[section]
\newtheorem{lemma}[theorem]{Lemma}

\newtheorem{cor}[theorem]{Corollary}

\theoremstyle{definition}

\newtheorem{example}[theorem]{Example}

\theoremstyle{remark}
\newtheorem{remark}[theorem]{Remark}

\numberwithin{equation}{section}

\def\dlim{\stackrel{ d}{=}}

\def\1{\mathbf{1}}
\def\R{\mathbb{R}}

\def\X{\mathbf{X}}
\def\Y{\mathbf{Y}}
\def\E{\mathbb{E}}

\newcommand{\ip}[2]{{\langle #1,#2\rangle}}

\def\eqfd{\stackrel{ fd}{=}}
\def\Sa{\mathcal{S}}

\makeatother

\usepackage{babel}

\begin{document}
\sloppy
\title[Modeling and simulation with operator scaling]{Modeling and simulation with operator scaling}
\footnote{{\it AMS classification (2000)}: 60G51, 60G52, 68U20, 60F05.}

\author{Serge Cohen}
\address{Serge Cohen, Universit\'{e}
de Toulouse, Universit{\'e} Paul Sabatier, Institut de Math\'{e}matiques
de Toulouse.  F-31062 Toulouse.
France. }
\email{Serge.Cohen@math.univ-toulouse.fr}
\urladdr{http://www.math.univ-toulouse.fr/$\sim$cohen/}

\author{Mark M. Meerschaert}
\address{Mark M. Meerschaert, Department of Mathematics, University of Nevada, Reno NV 89557 USA}
\email{mcubed\@@{}stt.msu.edu }
\urladdr{http://www.stt.msu.edu/$\sim$mcubed}

\author{Jan Rosi\'nski}
\address{Jan Rosi\'nski, Department of Mathematics, University of Tennessee, Knoxville, TN
37996, USA.}
\email{rosinski@math.utk.edu}
\urladdr{http://www.math.utk.edu/$\sim$rosinski/}

\date{December 23, 2009}

\begin{abstract}
Self-similar processes are useful in modeling diverse phenomena that exhibit scaling properties.  Operator scaling allows a different scale factor in each coordinate.  This paper develops practical methods for modeling and simulating stochastic processes with operator scaling.  A simulation method for operator stable L\'evy processes is developed, based on a series representation, along with a Gaussian approximation of the small jumps.  Several examples are given to illustrate practical applications.  A classification of operator stable L\'evy processes in two dimensions is provided according to their exponents and symmetry groups.  We conclude with some remarks and extensions to general operator self-similar processes.
\end{abstract}

\keywords{L\'evy processes, Gaussian approximation, shot
noise series expansions, simulation, tempered stable processes, operator
stable processes.}

%\subjclass{60G51, 60G52, 68U20, 60F05}

\thanks{M. M. Meerschaert was partially
supported by NSF grants DMS-0803360 and EAR-0823965.  J. Rosi\'nski was partially supported by NSA grant MSPF-50G-049.  S. Cohen wishes to thank the Department of Mathematics, University of Tennessee, for their hospitality during a visit that initiated this project.}

\maketitle

\section{Introduction}\label{sec:oss}

Self-similar processes form an important and useful class, favored in practical applications for their nice scaling properties, see for example the recent books of Embrechts and Maejima \cite{em:mae} and Sheluhin et al. \cite{Sheluhin}. In finance, self-similar processes such as fractional Brownian motion and stable L\'evy motion are used to model prices (or log returns).  Recall that a stochastic process $\X=\{X(t)\}_{t\ge0}$ taking values in $\R^{d}$ is \emph{self-similar} if
\begin{equation}\label{eq:ss}
\{X(ct)\}_{t\ge0} \, \eqfd \, \{c^\beta X(t)\}_{t\ge0}
\end{equation}
 at every scale $c>0$. Here $\eqfd$ indicates equality of finite dimensional distributions, and we assume $\X$ is stochastically continuous with $X(0)=0$.  The parameter $\beta>0$ is often called the Hurst index \cite{hurst}.
Operator self-similar processes allow the scaling factor (Hurst index) to vary with the coordinate.  Therefore, a process $\X$ as above is said to be \emph{operator self-similar} \emph{(o.s.s.)}
if there exists a linear operator $B\in\mathrm{GL}(\R^{d})$ such that
\begin{equation}
\left\{ X(ct)\right\} _{t\ge0}\,\eqfd\,\left\{ c^{B}X(t)\right\} _{t\ge0},\label{ss}
\end{equation}
for all $c>0$, where the matrix power $c^{B}:=\exp(B\log c)$. The linear operator $B$ in \eqref{ss}
is called an \emph{exponent} of the operator self-similar process
$\X$. If $B=\beta I$ for some
$\beta>0$, then $\X$ is self-similar.  If $B$ is diagonal, then the marginals of $\X$ are self-similar, and the Hurst index can vary with the coordinate.
This is important in modeling many real world phenomena.   Rachev and Mittnik \cite{MittnikR} show that the scaling index will vary between elements of a portfolio containing different stocks (see also Meerschaert and Scheffler \cite{portfolio}).  In ground water hydrology, Benson et al. \cite{WRRapply,BensonTiPM} use operator self-similar processes to characterize spreading plumes of pollution particles.  Because the structure of the intervening porous medium is not isotropic, the scaling properties vary with direction, see Meerschaert et al. \cite{gADE} and Zhang et al. \cite{RW2d}.  In tick-by-tick analysis of financial data, it is useful to consider the waiting time between trades and the resulting price change as a two dimensional random vector.  Meerschaert and Scalas \cite{coupleEcon} show that different indices apply to price jumps and waiting times, see also Scalas et al. \cite{SGMM}. Results and further references on {o.s.s.}~processes can be found in \cite[Chapter 9]{em:mae} and \cite[Chapter 11]{Meerschaert01}, see also the pioneering work of Hudson and Mason \cite{Hudson82}.

This paper focuses on operator self-similar L\'evy processes. They belong to the class of operator stable processes, which is reviewed in Section \ref{sec:os}. Operator stable processes admit parametrization by their operator exponents and spectral measures, which is a starting point for their analysis.
Section \ref{sec:series} presents a method for simulating the sample paths of operator stable L\'evy processes, consisting of a shot noise representation for the large jumps, and a Gaussian approximation of the small jumps.  Theorem \ref{th:ost} justifies this method and provides a bound on the error resulting from replacing the small jump part by a multidimensional Brownian motion. Section \ref{sec:sim} presents a number of examples to illustrate the method, including some practical applications. The uniqueness of the exponents, which is a critical issue in modeling, is determined by symmetries of such processes (see Sections \ref{sec:os} and \ref{xxx}).    Section \ref{sec:sym-oss} provides a classification of operator stable L\'evy processes in two dimensions according to their exponents and symmetry groups. There we also give a complete, explicit description of the possible symmetries in terms of the exponent and the spectral measure. Using these results, it is possible to construct an operator self-similar process with any given exponent and any admissible symmetry group. Finally, in Section \ref{xxx} we provide some concluding remarks and extensions to general operator self-similar processes.

\bigskip	

\section{Operator stable processes}\label{sec:os}

We say that a L\'evy process $\X=\{X(t)\}_{t\ge0}$ taking values in $\R^d$ is operator stable with exponent $B\in\text{GL}(\mathbb{R}^{d})$ if for every $t>0$ there exists a vector $b(t)\in\mathbb{R}^{d}$
such that
\begin{equation}
X(t)\,\dlim\, t^{B}X(1)+b(t)\label{eq:def-op-stable-mu}
\end{equation}
where $\dlim$ means equal in distribution.  We say that $\X$ is strictly operator stable when $b(t)=0$ for all $t>0$. A L\'evy process is operator self-similar if and only if it is strictly operator stable, in which case the exponents coincide \cite[Theorem 7]{Hudson82}.   In general, if $\X$ is operator stable and 1 is not an eigenvalue of the exponent $B$, then there exists a vector $a$ such that $\{X(t)-at\}_{t\ge0}$ is strictly operator stable; a complete description of strictly operator stable processes is given by Sato \cite{Sato}.   Henceforth we will always assume that the infinitely divisible distribution $\mu=\mathcal{L}(X(1))$ is full dimensional, i.e., not supported on a lower dimensional hyperplane.  The distributional properties of $\mu$ determine those of $\X$.  Indeed, two L\'evy processes $\X$ and $\Y$ have the same finite dimensional distributions if and only if $X(1)$ and $Y(1)$ are identically distributed.

A comprehensive introduction to operator stable laws can be found
in the monographs \cite{Jurek93} and \cite{Meerschaert01}.  Since an operator stable law $\mu$ is infinitely divisible, its characteristic function can be expressed in terms of the L\'evy representation (see, e.g., \cite[Theorem 3.1.11]{Meerschaert01}).  The necessary
and sufficient condition for a $d\times d$ matrix $B$ to be an exponent of a full operator stable law is
that all the roots of the minimal polynomial of $B$ have real parts
greater than or equal to $1/2$, and all the roots with real part equal to $1/2$ are simple, see \cite[Theorem 4.6.12]{Jurek93}.  Furthermore, we can decompose the space $\R^d$ into a direct sum of two $B$-invariant subspaces $\R^d=V_1\oplus V_2$ and write $\mu=\mu_1*\mu_2$, where $\mu_1$ is a normal law supported on $V_1$, and $\mu_2$ is an infinitely divisible law supported on $V_2$ having no normal component.  The restriction of $B$ to $V_1$ has all eigenvalues with real part equal to $1/2$, and the restriction of $B$ to $V_2$ has all eigenvalues with real part strictly greater than $1/2$.

The exponent $B$ in \eqref{ss} need not be unique.  The set of all possible exponents of $\mu$ is given by \cite[Theorem 4.6.7]{Jurek93}:
\begin{equation}\label{exp}
	\mathcal{E}(\mu) = B + T\Sa(\mu)
\end{equation}
 where $B\in\mathcal{E}(\mu)$ is arbitrary.  Here
\begin{equation}\label{sg-mu}
	\Sa(\mu) := \big\{A\in \mathrm{GL}(\R^d): \, A\mu = \mu\ast \delta_x \ \text{for some}  \ x \in \R^d\big\}
\end{equation}
is the symmetry group of the probability measure $\mu$, and $T\mathcal{S}(\mu)$ is the tangent space of $\mathcal{S}(\mu)$ at the identity.  The tangent space consists of all tangent vectors $x'(0)$ where $x(t)$ is a smooth curve on $\mathcal{S}(\mu)$ with $x(0)=I$.  It is a linear space closed under the Lie bracket $[A,B]=AB-BA$.  If $\mathcal{S}(\mu)$ is finite, then $T\mathcal{S}(\mu)=\{0\}$, and this is the only case when the exponent in \eqref{eq:def-op-stable-mu} is unique.  If $A \in \mathcal{S}(\mu)$ and $B$ is an exponent of $\mu$, then so is $A^{-1}BA$.  When the exponent is unique, we must have $AB=BA$, so $B$ commutes with $\mathcal{S}(\mu)$.  The use of commuting exponents simplifies the analysis of $\mathcal{E}(\mu)$. Every operator stable law $\mu$ has an exponent $B_c$ that commutes with $\Sa(\mu)$, see \cite[Theorem 4.7.1]{Jurek93}.  If $\mu$ is operator stable with $\mathcal{S}(\X)=\mathcal{O}_d$,
the orthogonal group on $\R^d$, then $B_c=\beta I$ for some $\beta>0$ is the only commuting exponent, and $\mu$ is multivariable stable with index $1/\beta$.  Since $T(\mathcal{O}_d)=\mathcal{Q}_d$ is the linear space of skew symmetric matrices, we get from \eqref{exp} that
\begin{equation}
\mathcal{E}(\mu)=\beta I+\mathcal{Q}_d. \label{expO}
\end{equation}
Recall that a matrix $Q$ is skew-symmetric if $Q^{\top}=-Q$, where $Q^{\top}$
is the transpose of $Q$.

If $\mathcal{S}(\mu)$ is an arbitrary compact subgroup of $\mathrm{GL}(\R^{d}),$
then by a classical result of algebra (see, e.g., \cite[Theorem 5]{Billingsley1966}) there exists a symmetric positive-definite
matrix $W$ and a compact subgroup $\mathcal{G}$ of the orthogonal
group $\mathcal{O}_d$ such that
\begin{equation}
\mathcal{S}(\mu)=W^{-1}\mathcal{G} W. \label{sg1}
\end{equation}
Then \eqref{exp} becomes
\begin{equation}\label{exp1}
\mathcal{E}(\mu)=B + W^{-1} \mathcal{H} W,
\end{equation}
where $\mathcal{H}$ is the tangent space of $\mathcal{G}$.

Theorem 2 in \cite{Meerschaert95} implies that a compact subgroup $\mathcal{G}$ of $\mathrm{GL}(\R^d)$ can be a symmetry group of a full dimensional probability distribution on $\R^d$ if and only if it is maximal, meaning that $\mathcal{G}$ cannot be strictly contained in any other subgroup that has the same orbits.  For example, the special orthogonal group $\mathcal{O}_d^+$ is not maximal because $\mathcal{O}_d^+ x=\mathcal{O}_d x$ for every $x\in \R^d$, and $\mathcal{O}_d^+$ is a proper subgroup of $\mathcal{O}_d$. Consequently, $\mathcal{O}_d^+$ cannot be the symmetry group of any full dimensional probability measure on $\R^d$.  Actually Theorem 2 in \cite{Meerschaert95} characterizes the strict symmetry group of $\mu$ defined by
\begin{equation}\label{ssg-mu}
{\mathcal S}_0(\mu):= \big\{A\in \mathrm{GL}(\R^d): \, A\mu = \mu\} .
\end{equation}
However, Theorem 5 in Billingsley \cite{Billingsley1966} implies that $\Sa(\mu) = \mathcal{S}_0(\mu\ast \delta_a)$ for some $a\in \R^d$.  Hence $\Sa(\mu)$ must be maximal as well.

In this work we will assume that the operator stable law $\mu$ has no Gaussian component, so that all the roots of the minimal polynomial of $B$ have real parts greater than $1/2$. For a given exponent $B$, consider a norm $\|\cdot\|_B$  on $\R^d$ satisfying the following conditions
\begin{itemize}
	\item[(i)] for each $x\in \R^d$, $x\ne 0$, the map  $t \mapsto \|t^Bx\|_B$ is strictly increasing in $t>0$,
	\item[(ii)]  the map $(t,x) \mapsto t^B x$ from $(0,\infty)\times S_B$ onto $\R^d \setminus \{0\}$  is a homeomorphism,
\end{itemize}
where $S_B=\left\{ x\in\R^{d}:\,\|x\|_{B}=1\right\}$ is the unit sphere with respect to $\|\cdot\|_B$.
There are many ways of constructing such norms. For example, Jurek and Mason \cite[Proposition 4.3.4]{Jurek93} propose
\begin{equation}\label{JMnorm}
\|x\|_B=  \left( \int_0^1 \|s^B x\|^p s^{-1} \, ds \right)^{1/p}
\end{equation}
where $1\le p<\infty$ and $\|\cdot\|$ is {\em any} norm on $\R^d$. Meerschaert and Scheffler \cite[Remark 6.1.6]{Meerschaert01}  observe that if the matrix $B$ is in the Jordan form, then the Euclidean norm satisfies (i)-(ii). Moreover, in this case  the function $t\mapsto \Vert t^{B}x\Vert$ is  regularly varying. Under conditions (i)-(ii) we have the following {\em polar decomposition} of the L\'evy measure of an operator stable law
\begin{equation} \label{eq:Lm-ost}
\nu(E)=\int_{S_{B}}\int_{0}^{\infty}\mathbf{1}_{E}(s^{B}u)s^{-2}\, ds\lambda(du),
 \qquad E \in \mathcal{B}(\mathbb{R}^{d}),
\end{equation}
where $\lambda$ is a finite Borel measure on $S_{B}$ called the {\em spectral measure} of $\mu$. The spectral measure is given by
\begin{equation}\label{eq:sm}
	\lambda(F)=\nu(\{x: x=t^Bu, \ \text{for some } (t,u) \in [1,\infty)\times F \}),  \quad F \in \mathcal{B}(S_B)
\end{equation}
and then it follows from \eqref{eq:Lm-ost} and \eqref{eq:sm} that the spectral measure $\lambda$ is uniquely determined for a given L\'evy measure $\nu$, exponent $B$, and norm $\|x\|_{B}$.   The choice of $\|\cdot\|_B$ is a matter of convenience.
For example, if $B$ is in Jordan form, then the Euclidean norm $\|\cdot\|$ is a natural choice for $\|\cdot\|_B$.  Since $\mu$ is full, the smallest linear space supporting the L\'evy measure $\nu$ is $\R^d$ \cite[Proposition 3.1.20]{Meerschaert01}. Moreover, we have a relation between the symmetries of $\mu$ and the strict symmetries of $\nu$
\begin{equation}\label{sm=sn}
		\Sa(\mu) = \mathcal{S}_0(\nu):=\big\{A\in \mathrm{GL}(\R^d): \, A\nu = \nu \big\} ,
\end{equation}
which is valid for any infinitely divisible distribution without Gaussian part.

\section{Accelerated series representation}\label{sec:series}

Let $\X=\{X(t)\}_{t\ge0}$ be a proper operator stable L\'evy process
with exponent $B$, no Gaussian component, and characteristic function in the L\'evy-Khintchine form
\begin{equation} \label{eq:ch.f.}
\log\mathbb{E}e^{i\langle y,X(1)\rangle}=i\langle y,x_{0}\rangle+\int_{\mathbb{R}^{d}}(e^{i\langle y,x\rangle}-1-i\langle y,x\rangle\mathbf{1}_{\{\Vert x\Vert\le1\}})\,\nu(dx).
\end{equation}
In this section, we present a practical method for simulating sample paths of this process.  Our method is based on a series representation \cite{Rosinski90} in which the small jumps are approximated by a Brownian motion \cite{Cohen07}.  The Gaussian approximation of small jumps accelerates the convergence of the series representation, allowing a fast and accurate simulation of sample paths.
Assume that the L\'evy measure $\nu$ is given by \eqref{eq:Lm-ost},
%$$
%\nu(E)=\int_{S_{B}}\int_{0}^{\infty}\mathbf{1}_{E}(s^{B}u)s^{-2}\, ds\lambda(du),
% \qquad E \in \mathcal{B}(\mathbb{R}^{d}),
%$$
where $S_B$ is the unit sphere with respect to a norm $\|\cdot\|_B$ satisfying conditions (i)-(ii) of the previous section and $\lambda$ is a finite measure on $S_B$. Our approach to simulation of $\X$ is based on a series
expansion and a Gaussian approximation of the remainder of such series.
Such a series expansion falls into a general category of shot noise
representations and is a consequence of the polar decomposition \eqref{eq:Lm-ost},
see remark following \cite[Corollary 4.4]{Rosinski90}. Namely, for
any fixed $T>0$,

\begin{equation}
X(t)= x_0+\sum_{j=1}^{\infty}\left\{ \mathbf{1}_{(0,\, t]}(\tau_{j})\left(\frac{\Gamma_{j}}{T\lambda(S_{B})}\right)^{-B}v_{j}- \frac{t}{T}c_{j} \right\} ,\quad t\in[0,T],\label{eq:series}
\end{equation}
 where $\{\tau_{j}\}$ is an iid sequence of uniform on $[0, T]$
random variables, $\{\Gamma_{j}\}$ forms a Poisson point process
on $(0,\infty)$ with the Lebesgue intensity measure, $\{v_{j}\}$
is an iid sequence on $S_{B}$ with the common distribution $\lambda/\lambda(S_{B}),$ and
%$\{c_{j}\}$ is a specific sequence of constants in $\R^{d}$ given by
\begin{equation}
c_j=\int_{j-1}^j  \int_{\|x\|\leq 1} x \sigma_r(dx)\,dr ,\label{eq:cj1}
\end{equation}
where
\begin{equation}
\sigma_r(A)=P\left(\left(\frac{r}{T\lambda(S_{B})}\right)^{-B} v_1\in A\right) \label{eq:cj2}
\end{equation}
 (see \cite[Eq.\ (5.6)]{Rosinski01b}).
The random sequences $\{\tau_{j}\}$, $\{\Gamma_{j}\}$, and $\{v_{j}\}$ are
independent. The series \eqref{eq:series} converges pathwise uniformly
on $[0,T]$ with probability one, see \cite[Theorem 5.1]{Rosinski01b}.
Fix $\epsilon\in (0,1]$ and define $\mathbf{N}^{\epsilon}=\left\{ N^{\epsilon}(t)\right\} _{t\in[0,T]}$
by
\begin{equation}
N^{\epsilon}(t)=\sum_{\Gamma_{j}\le T\lambda(S_B)/\epsilon }I_{(0,\, t]}(\tau_{j})\left(\frac{\Gamma_{j}}{T\lambda(S_{B})}\right)^{-B}v_{j}.
\label{eq:ostN}
\end{equation}
It is elementary to check that $\mathbf{N}^{\epsilon}$ is a compound
Poisson process with characteristic function
\[
\mathbf{\mathbb{E}}\exp i\langle y,N^{\epsilon}(t)\rangle=\exp\left\{ t\int_{S_{B}}\int_{\epsilon}^{\infty}(e^{i\langle y,s^{B}u\rangle}-1)s^{-2}\, ds\lambda(du)\right\}\,.
\]
To see this: Observe that the number of terms $M_\epsilon$ in the sum \eqref{eq:ostN} is Poisson with mean $\theta_\epsilon=T\lambda(S_B)/\epsilon$; condition on $M_\epsilon=n$ in the characteristic function, noting that $(\Gamma_1/\theta_{\epsilon},\ldots,\Gamma_n/\theta_{\epsilon})$ is equal in distribution to the vector of order statistics from $n$ IID standard uniform random variables; permute the order statistics;  and rewrite the characteristic function as an integral.
Thus $\mathbf{N}^{\epsilon}$ has the L\'evy measure
\[
\nu^{\epsilon}(A)=\int_{S_{B}}\int_{\epsilon}^{\infty}\mathbf{1}_{A}(s^{B}u)s^{-2}\, ds\lambda(du).
\]
The remainder
\begin{equation}
R_{\epsilon}(t)= X(t) - N^{\epsilon}(t),\label{eq:R}
\end{equation}
is a L\'evy process independent of $\mathbf{N}^{\epsilon}$ and $R_{\epsilon}(1)$
has L\'evy measure $\nu_{\epsilon}$ of bounded support given by
\begin{equation}\label{Rlevy}
\nu_{\epsilon}(A)=\int_{S_{B}}\int_{0}^{\epsilon}\mathbf{1}_{A}(s^{B}u)s^{-2}\, ds\lambda(du).
\end{equation}
Therefore, all moments of $R_{\epsilon}(1)$ are finite. A straightforward computation shows that
\begin{equation}\label{eq:drift}
a_{\epsilon}: = \E R_{\epsilon}(1) =x_{0}+ \int_{\Vert x\Vert>1}x\,\nu_{\epsilon}(dx)-\int_{\Vert x\Vert\le1}x\,\nu^{\epsilon}(dx).
\end{equation}
Then we have
$$
X(t) =t a_{\epsilon} +  N^{\epsilon}(t) + \{R_{\epsilon}(t)-\mathbb{E}[R_{\epsilon}(t)]\}.
$$
In our main theorem we will show that under certain matrix scaling $R_{\epsilon}(t)-\mathbb{E}[R_{\epsilon}(t)]$
converges to a standard Brownian motion in $\R^{d}.$  Hence any operator stable L\'evy process can be faithfully approximated by the sum of two independent component processes, a compound Poisson and a Brownian motion with drift.  To this end
we will use Theorem 3.1 in \cite{Cohen07}.  A simple computation (see \cite[Eq.\ (2.3)]{Cohen07}) shows that the covariance matrix $\Sigma_{\epsilon}$ of $R_{\epsilon}(1)$ is given by
\begin{equation}\begin{split}\label{eq:s1}
\Sigma_{\epsilon} & =\mathbb{E}\left[(R_{\epsilon}(1)-\mathbb{E}[R_{\epsilon}(1)]) (R_{\epsilon}(1)-\mathbb{E}[R_{\epsilon}(1)])^{\top}\right]\\
& =\int_{S_{B}}\int_{0}^{\epsilon}(s^{B}u)(s^{B}u)^{\top}\, s^{-2}\, ds\lambda(du)=\int_{0}^{\epsilon}s^{B}\Lambda(s^{B})^{\top}\, s^{-2}ds,
\end{split}\end{equation}
 where $\Lambda$ is given by
\begin{equation}
\Lambda=\int_{S_{B}}uu^{\top}\,\lambda(du).\label{eq:L}
\end{equation}
We observe the following scaling
 \begin{equation}
\Sigma_{\epsilon}=\epsilon^{-1}\int_{0}^{1}(\epsilon\, r)^{B}\Lambda((\epsilon\, r)^{B})^{\top}\, r^{-2}dr=\epsilon^{-1}\epsilon^{B}\Sigma_{1}(\epsilon^{B})^{\top}.\label{eq:sigma-sc}
\end{equation}
Theorem 3.1 in \cite{Cohen07} assumes that $\Sigma_{\epsilon}$ is
nonsingular for all $\epsilon>0.$ Since we consider the spectral measure together with the exponent $B$ as primary parameters of an operator stable law, it is natural to state the nonsingularity condition in terms of these characteristics.
 Let $\text{lin}_{B}(\text{supp}\lambda)$
denote the smallest $B$-invariant subspace of $\mathbb{R}^{d}$
containing the support of $\lambda$. If $\nu$ is as in \eqref{eq:Lm-ost},
then the support of $\nu$ is not contained in a proper subspace of
$\mathbb{R}^{d}$ if and only if
\begin{equation}
\text{lin}_{B}(\text{supp}\ \lambda)=\mathbb{R}^{d}\label{eq:linB}
\end{equation}
cf. \cite{Jurek93}, Corollary 4.3.5. In particular, \eqref{eq:linB}
holds when $\lambda$ is not concentrated on a proper subspace of $\mathbb{R}^{d}$.

As we have stated in Section \ref{sec:oss}, since $\X$ does not have  Gaussian component,
\begin{equation}
b_{\ast}:=\min\{b_{1},\ldots,b_{d}\}>\frac{1}{2}\,,\label{eq:b}
\end{equation}
 where $b_{1},\dots,b_{d}$ are the real parts of the eigenvalues
of $B$. This quantity does not depend on a choice of $B$ because the real parts of eigenvalues of all exponents of $\X$ are the same (see \cite[Corollary 7.2.12]{Meerschaert01}).

\medskip

\begin{theorem} \label{th:ost} Let $\mathbf{X}$ be an operator
stable L\'evy process with exponent $B$ and let the L\'evy measure of
$X(1)$ be given by \eqref{eq:Lm-ost} such that \eqref{eq:linB}
holds. Fix $T>0$ and let $\mathbf{N}^{\epsilon}$ be as in \eqref{eq:ostN}, $\mathbf{W}$
be a standard Brownian motion in $\mathbb{R}^{d}$ independent of
$\mathbf{N}^{\epsilon}$, and $\mathbf{a}_{\epsilon}=\{a_{\epsilon}t\}_{t\ge0}$
be a drift determined by \eqref{eq:drift}.  Define
\begin{equation}\label{Aepsdef}
A_\epsilon=\epsilon^{-1/2}\epsilon^{B}\Sigma_{1}^{1/2}
\end{equation}
where $\Sigma_{1}$ is given by \eqref{eq:s1} with $\epsilon=1$.

Then, for every $\epsilon\in(0,1]$ there exists a c\'adl\'ag process
$\mathbf{Y}_{\epsilon}$ such that on $[0,T]$
\begin{equation}
\mathbf{X\,}\eqfd\mathbf{\, a}_{\epsilon}+A_\epsilon\mathbf{W}+\mathbf{N}^{\epsilon}+\mathbf{Y}_{\epsilon}\label{eq:npa-ost}
\end{equation}
in the sense of equality of finite dimensional distributions and
such that for every $\delta>0$
\begin{equation}
\epsilon^{1/2-b_{\ast}+\delta}\sup_{t\in[0,T]}\|Y_{\epsilon}(t)\|\stackrel{\mathbb{P}}{\longrightarrow}0\quad\text{as}\ \epsilon\to0\label{eq:rest-ost}
\end{equation}
 where $b_{\ast}$ is given by \eqref{eq:b}.
\end{theorem}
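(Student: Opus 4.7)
The starting point is the decomposition displayed just above the statement,
\begin{equation*}
X(t) \;=\; ta_\epsilon + N^\epsilon(t) + M_\epsilon(t),\qquad M_\epsilon(t):=R_\epsilon(t)-\E R_\epsilon(t),
\end{equation*}
in which $M_\epsilon$ is a centered L\'evy process, independent of $\mathbf{N}^\epsilon$, whose L\'evy measure $\nu_\epsilon$ is supported in $\{s^B u:0<s\le\epsilon,\ u\in\mathrm{supp}\,\lambda\}$ and whose time-$1$ covariance factors as $\Sigma_\epsilon = A_\epsilon A_\epsilon^\top$ by \eqref{eq:sigma-sc}. The problem thus reduces to producing a Gaussian approximation of the small-jump process $M_\epsilon$ by $A_\epsilon \mathbf{W}$, with the error playing the role of $\mathbf{Y}_\epsilon$.

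This is precisely the setting of Theorem 3.1 of \cite{Cohen07}, which I would invoke as the main tool. Its nonsingularity hypothesis on $\Sigma_\epsilon$ is the first thing to verify: the support of $\nu_\epsilon$ is $\{s^Bu:0<s\le\epsilon,\,u\in\mathrm{supp}\,\lambda\}$, and condition \eqref{eq:linB} together with \cite[Corollary 4.3.5]{Jurek93} guarantees that this support is not contained in any proper subspace of $\R^d$; therefore $\Sigma_\epsilon=\int uu^\top\,\nu_\epsilon(du)$ is positive definite for every $\epsilon\in(0,1]$. Applying \cite[Theorem 3.1]{Cohen07} to $M_\epsilon$ then produces a standard Brownian motion $\mathbf{W}$, which can be taken independent of $\mathbf{N}^\epsilon$ because only finite-dimensional distributional equality is required in \eqref{eq:npa-ost}, together with a c\`adl\`ag remainder $\mathbf{Y}_\epsilon$ satisfying $M_\epsilon \eqfd A_\epsilon\mathbf{W}+\mathbf{Y}_\epsilon$, and a quantitative control of $\sup_{t\in[0,T]}\|Y_\epsilon(t)\|$ in terms of the largest jump of $M_\epsilon$ and norm quantities associated with $A_\epsilon$. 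This immediately gives \eqref{eq:npa-ost}.

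The remaining work is to convert the abstract Cohen--Rosi\'nski bound into the explicit rate \eqref{eq:rest-ost}. Using the assumption that all real parts of eigenvalues of $B$ exceed $1/2$ and the standard $t^B$-norm asymptotics (see, e.g., \cite[Chapter 2]{Meerschaert01}), the largest jump of $M_\epsilon$ is $O(\epsilon^{b_*-\eta})$ and $\|\epsilon^B\|$-type quantities entering the bound admit analogous polynomial-logarithmic estimates, for any $\eta>0$. Substituting these into the bound from \cite[Theorem 3.1]{Cohen07} and absorbing $\eta$ into the free exponent yields \eqref{eq:rest-ost}. I expect this final rate bookkeeping to be the main obstacle: the matrix power $\epsilon^B$ generates logarithmic factors whenever $B$ fails to be diagonalizable, and it is precisely these factors which force the arbitrary $\delta>0$ in \eqref{eq:rest-ost}. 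Once the hypotheses of \cite[Theorem 3.1]{Cohen07} are checked and these scaling estimates tracked through, both \eqref{eq:npa-ost} and \eqref{eq:rest-ost} follow.
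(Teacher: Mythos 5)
Your overall strategy is the same as the paper's: reduce to the small-jump process $R_\epsilon-\E R_\epsilon$, invoke \cite[Theorem 3.1]{Cohen07}, and then convert the conclusion into the rate \eqref{eq:rest-ost} by estimating $\|A_\epsilon\|$ via $\|\epsilon^B\|\le c\,\epsilon^{b_*-\delta}$ (the paper does exactly this last step using \cite[Proposition 2.2.11(d)]{Meerschaert01}, and your instinct about where the $\delta$ comes from is right). However, there is a genuine gap: \cite[Theorem 3.1]{Cohen07} does not apply merely because $\Sigma_\epsilon$ is nonsingular. Its essential hypothesis is the Lindeberg-type asymptotic normality condition (equivalently, by \cite[Theorem 2.2]{Cohen07}, that for every $\kappa>0$,
\begin{equation*}
\lim_{\epsilon\to0}\int_{\langle \Sigma_\epsilon^{-1}x,x\rangle>\kappa}\langle \Sigma_\epsilon^{-1}x,x\rangle\,\nu_\epsilon(dx)=0),
\end{equation*}
which can fail for a general L\'evy process, so it must be checked and is where the operator-stable structure actually enters. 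The paper verifies it using the exact scaling $\Sigma_\epsilon=\epsilon^{-1}\epsilon^B\Sigma_1(\epsilon^B)^\top$ from \eqref{eq:sigma-sc}: writing $x=s^Bu$ with $0<s\le\epsilon$ and $u\in S_B$, one gets $\langle \Sigma_\epsilon^{-1}s^Bu,s^Bu\rangle=\epsilon\langle \Sigma_1^{-1}(s/\epsilon)^Bu,(s/\epsilon)^Bu\rangle\le C\epsilon\|\Sigma_1^{-1}\|$ by the monotonicity of $t\mapsto\|t^Bu\|_B$, so the domain of integration is empty once $\epsilon<\kappa/(C\|\Sigma_1^{-1}\|)$ and the condition holds trivially. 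Your proposal never mentions this condition, and without it the appeal to \cite[Theorem 3.1]{Cohen07} is unjustified.

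A second, smaller under-argument: your nonsingularity claim for $\Sigma_\epsilon$ cites \eqref{eq:linB} and \cite[Corollary 4.3.5]{Jurek93}, but \eqref{eq:linB} only says the smallest $B$-invariant subspace containing $\mathrm{supp}\,\lambda$ is $\R^d$; you still need to show that the closed linear span $L$ of the truncated support $\{s^Bu:0<s\le1,\ u\in\mathrm{supp}\,\lambda\}$ is itself $B$-invariant before you can conclude $L=\R^d$. The paper does this by the derivative trick $Bs^Bu=\lim_{\theta\nearrow1}\bigl((\theta s)^Bu-s^Bu\bigr)/\log\theta\in L$, and then invokes \cite[Lemma 2.1]{Cohen07} to pass from fullness of $\nu_1$ to invertibility of $\Sigma_1$ (note also that by the scaling \eqref{eq:sigma-sc} it suffices to treat $\Sigma_1$, which streamlines everything). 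Also, your claim that the Cohen--Rosi\'nski bound controls $\sup_t\|Y_\epsilon(t)\|$ in terms of the largest jump is not what the theorem gives: it yields $\sup_{t\in[0,T]}\|A_\epsilon^{-1}Y_\epsilon(t)\|\to0$ in probability with no explicit rate, and the whole polynomial rate in \eqref{eq:rest-ost} comes from the single operator-norm estimate $\|A_\epsilon\|\le c\,\epsilon^{-1/2+b_*-\delta}$.
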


\begin{proof}
First we will prove that $\Sigma_{1}$ is nonsingular. Let $\nu_1$ be the L\'evy measure \eqref{Rlevy} with $\epsilon=1$ and let
 \[
L=\text{lin}(\text{supp}\,\nu_{1})
\]
 be the closed linear space spanned by $\text{supp}\,\nu_{1}$. By \cite[Lemma 2.1]{Cohen07}
it suffices to show that $\text{lin}(\text{supp}\,\nu_{1})=\mathbb{R}^{d}$. Following
\cite[Corollary 4.3.5]{Jurek93} we have
\[
\text{supp}\,\nu_{1}=\{x:x=s^{B}u,\ 0\le s\le 1,\ u\in\text{supp}\,\lambda\}.
\]
 We will show
that $L$ is $B$--invariant. To this end it is enough to show that
if $x=s^{B}u\in\text{supp}\,\nu_{1}$, for some $0<s\le 1$ and $u\in\text{supp}\,\lambda$,
then $Bs^{B}u\in L$. For any $\theta \in (0,1)$,  $(\theta s)^{B}u\in\text{supp}\,\nu_{1}$ so that
\[
Bs^{B}u=\lim_{\theta\nearrow1}\frac{(\theta s)^{B}u-s^{B}u}{\log\theta}\in L.
\]
Since $L$ is closed and $B$--invariant and contains the support of $\lambda$,
$L=\mathbb{R}^{d}$ by \eqref{eq:linB}. Thus $\Sigma_{1}$ is nonsingular.

Theorem 2.2 in \cite{Cohen07} shows that the asymptotic normality
of $R_{\epsilon}(t)-\mathbb{E}[R_{\epsilon}(t)]$ holds if and only if for every $\kappa>0$ we have
\begin{equation}
\lim_{\epsilon\to0}\int_{\langle{\Sigma}_{\epsilon}^{-1}x,x\rangle>\kappa}\langle{\Sigma}_{\epsilon}^{-1}x,x\rangle\,\nu_{\epsilon}(dx)=0.\label{eq:va-b}
\end{equation}
Using \eqref{eq:sigma-sc} we have
\begin{equation*}\begin{split}
\langle{\Sigma}_{\epsilon}^{-1}s^B u,s^B u\rangle
&=\epsilon \langle (\epsilon^{-B})^{\top}\Sigma_{1}^{-1}\epsilon^{-B}s^B u, s^B u\rangle\\
&=\epsilon \langle \Sigma_{1}^{-1}\epsilon^{-B}s^B u, \epsilon^{-B} s^B u\rangle\\
&=\epsilon \langle \Sigma_{1}^{-1}(s/\epsilon)^B u, (s/\epsilon)^B u\rangle
\end{split}\end{equation*}
Note that in general $\langle Ax,x\rangle\leq \|A\| \|x\|^2 \le C \|A\| \|x\|_B^2$ (for some constant $C >0$, since all norms on $\R^{d}$ are equivalent).  Then, since $t\mapsto \|t^Bu\|_B$ is strictly increasing and $t^B x=x$ when $t=1$, the above bound shows that
\begin{equation}
\langle{\Sigma}_{\epsilon}^{-1}s^{B}u,s^{B}u\rangle\le C \epsilon \|\Sigma_{1}^{-1}\| \|(s/\epsilon)^B u\|_B^2\  \le C \epsilon \|\Sigma_{1}^{-1}\|,\label{eq:osi-lb3}
\end{equation}
whenever $0<s\le\epsilon\le1$ and $u\in S_{B}$.  Since $\Sigma_1$ is invertible we know that $c_{1}=C\|\Sigma_{1}^{-1}\|\in (0,\infty)$.  Then, for every
$\kappa>0$ and $\epsilon\in(0,1)$ we have
\begin{align*}
\int_{\langle{\Sigma}_{\epsilon}^{-1}x,x\rangle>\kappa} & \langle{\Sigma}_{\epsilon}^{-1}x,x\rangle\,\nu_{\epsilon}(dx)\\
 & =\iint_{\{(s,u)\in(0,\epsilon]\times S_{B}:\ \langle{\Sigma}_{\epsilon}^{-1}s^{B}u,s^{B}u\rangle>\kappa\}}\langle{\Sigma}_{\epsilon}^{-1}s^{B}u,s^{B}u\rangle\, s^{-2}\, ds\lambda(du)\\
 & =0\end{align*}
when $\epsilon<c_{1}^{-1}\kappa$. Indeed, in view of \eqref{eq:osi-lb3} the region
of integration is empty for $c_1\epsilon<\kappa$.
Therefore, \eqref{eq:va-b} trivially holds.

Applying \cite[Theorem 3.1]{Cohen07}
%with
%\[
%A_{\epsilon}=\epsilon^{-1/2}\epsilon^{B}\Sigma_{1}^{1/2}\]
 we get \eqref{eq:npa-ost} and that
 \begin{equation}
\sup_{t\in[0,T]}\|A_{\epsilon}^{-1}Y_{\epsilon}(t)\|\stackrel{\mathbb{P}}{\longrightarrow}0\quad\text{as}\ \epsilon\to0.\label{eq:rest-ost1}
\end{equation}
 It remains to show \eqref{eq:rest-ost}.   If $\|\Sigma_{1}\|=c_2$ then $\|\Sigma_{1}^{1/2}\|=\sqrt{c_2}$.  Since every eigenvalue of $-B$ has real part less than or equal to $-b_*$, \cite[Proposition 2.2.11 (d)]{Meerschaert01} implies that for any $\delta>0$, for some $c_3>0$, we have $\|t^{-B}x\|\leq c_3 t^{-b_*+\delta}\|x\|$ for all $t\geq 1$ and all $x\in{\mathbb R}^d$.  Then $\|s^{B}\|\leq c_3 s^{b_*-\delta}$ for all $s\leq 1$.  Then for all $0<\epsilon\leq 1$ we have
 \[
\|A_{\epsilon}\|\leq \epsilon^{-1/2}\|\epsilon^{B}\|\,\|\Sigma_{1}^{1/2}\|\le c\epsilon^{-1/2-\delta+b_{\ast}}.
\]
 where $c=c_3 \sqrt{c_2 }$. Therefore,
 \[
\|Y_{\epsilon}(t)\|\le\|A_{\epsilon}\|\|A_{\epsilon}^{-1}Y_{\epsilon}(t)\|\le c\epsilon^{-1/2-\delta+b_{*}}\|A_{\epsilon}^{-1}Y_{\epsilon}(t)\|,
\]
 which together with \eqref{eq:rest-ost1} yields \eqref{eq:rest-ost}.
The proof is complete.
\end{proof}
\bigskip

\section{Simulation}\label{sec:sim}

The main goal of this paper is to provide a practical method for simulating the sample paths of an operator stable L\'evy process $\X$.  Theorem \ref{th:ost} decomposes $\X$ into the drift $a_{\epsilon}t$, the large jumps $N^{\epsilon}(t)$, and a Gaussian approximation of the small jumps, with a remainder term whose supremum converges to zero in probability at a polynomial rate as the number of large jumps increases (or, equivalently, as the size of the remaining jumps tends to zero).  In this section, we will demonstrate the practical application of Theorem \ref{th:ost}, and illustrate the resulting sample paths.

Theorem \ref{th:ost} justifies the use of the process
\begin{equation}\label{eq:Ze}
Z_{\epsilon}(t):=a_{\epsilon}t+A_{\epsilon}W(t)+N^{\epsilon}(t),	
\end{equation}
with $A_{\epsilon}$ given by \eqref{Aepsdef} and $W(t)$ a standard Brownian motion, to simulate sample paths of the operator stable process $\{X(t)\}_{t\in[0,T]}$ specified
by \eqref{eq:ch.f.} and \eqref{eq:Lm-ost}.  Formula \eqref{eq:rest-ost}
shows that the approximation converges faster when the real parts of the
eigenvalues of $B$ are uniformly larger.  Remark 7.2.10 in \cite{Meerschaert01} shows that the real parts of the eigenvalues of the exponent $B$ govern the tails of the operator stable process $X(t)$, and $b_*=\min\{b_1,\ldots,b_d\}>1/2$ determines the lightest tail, in the sense that ${\mathbb E}|\langle X(t),u \rangle | ^\rho$ diverges for all $\rho>1/b_*$ and all $u\neq 0$.  Hence the convergence is faster when $X$ has a heavier tail.

The process $Z_{\epsilon}(t)$ in \eqref{eq:Ze} approximates the operator stable process $\{X(t)\}_{t\in[0,T]}$ by discarding small jumps, replacing their sum by an appropriate Brownian motion with drift.  The discarded random jumps are all of the form $r^B v$ where $v\in S_B$ and $r\leq \epsilon$.  If $B$ has no nilpotent part then $\|r^B v\|_B\leq \epsilon^{b_*}$.  Hence in order to retain all jumps larger than $m$ it suffices to take $\epsilon=m^{1/b_*}$, and then the number of jumps simulated will be Poisson with mean $m^{-1/b_*}T\lambda(S_B)$.  If there is a nilpotent part, the bound involves additional $\log\epsilon$ terms.

In general, an operator stable process can be decomposed into two independent component processes, one Gaussian and another having no Gaussian component.  The two components are supported on subspaces of ${\mathbb R}^d$ whose intersection is trivial.  In practical applications, Theorem \ref{th:ost} is applied to the nonnormal component.  In the case where $X(t)$ has both a normal and a nonnormal component, the resulting approximation combines a full dimensional Brownian motion with drift, and a Poissonian component restricted to the nonnormal subspace.  For the remainder of this section, we will focus on simulating operator stable laws on $\R^2$ having no normal component.

In practical applications, it is advantageous to produce a simulated process whose mean (if the mean exists) equals that of the operator stable process $X(t)$.  If every eigenvalue of the exponent $B$ has real part $b<1$, then the mean exists, by \cite[Theorem 8.2.14]{Meerschaert01}. If any eigenvalue has real part $b> 1$ then the mean is undefined.  In the former case, one can choose $a_{\epsilon}$ so that the right-hand side in  \eqref{eq:Ze} has mean zero.  Recall that the number of terms $M_\epsilon$ in the sum \eqref{eq:ostN} defining $N^{\epsilon}(t)$ is Poisson with mean $\theta_\epsilon=T\lambda(S_B)/\epsilon$, and that conditional on $M_\epsilon=n$, $(\Gamma_1/\theta_{\epsilon},\ldots,\Gamma_n/\theta_{\epsilon})$ is equal in distribution to the vector of order statistics from $n$ IID standard uniform random variables.  Condition to get
$\E[N^{\epsilon}(t)|M_\epsilon=n]=n(t/T) \E[(\epsilon U)^{-B}]\E[v]$
where $U$ is standard uniform and $v$ has distribution $\lambda/\lambda(S_{B})$.  Removing the condition and simplifying shows that
\begin{equation}\label{center}
\E[N^{\epsilon}(t)]=t\lambda(S_B)\epsilon^{B-I} \E[U^{-B}]\E[v].
\end{equation}
Since $\E[W(t)]=0$ we can set $a_{\epsilon}t=-\E[N^{\epsilon}(t)]$ to get mean zero.  Note that for such $B$ we have $\|\epsilon^{B-I}x\|\to\infty$ for all $x\neq 0$ by \cite[Theorem 2.2.4]{Meerschaert01}, so that $\|a_{\epsilon}\|\to\infty$ as $\epsilon\to 0$.  This reflects the fact that, in the finite mean case, the infinite series \eqref{eq:series} does not converge without centering.  Finally we note that, if $\E[v]=0$, then no centering is necessary.

In this section, we assume a fixed coordinate system on $\R^2$ with the standard coordinate vectors $e_1=[1,0]^{\top}$ and $e_2=[0,1]^{\top}$, and we write $X(t)=X_1(t)e_1+X_2(t)e_2$.  Recall that a strictly operator stable process satisfies the scaling relationship
\begin{equation}\label{osscale2}
X(t)\dlim t^B X(1)
\end{equation}
 for all $t>0$.  All plots in this section use $T=1$ and $\epsilon=0.001$, and we show the simulated processes at the time points $t=n \Delta t$ for $0\leq t\leq T$ with $\Delta t=0.001$.  Unless otherwise noted, we use the standard Euclidean norm.

\begin{figure}\label{figexA}
\begin{center}
\includegraphics[width=2.9in]{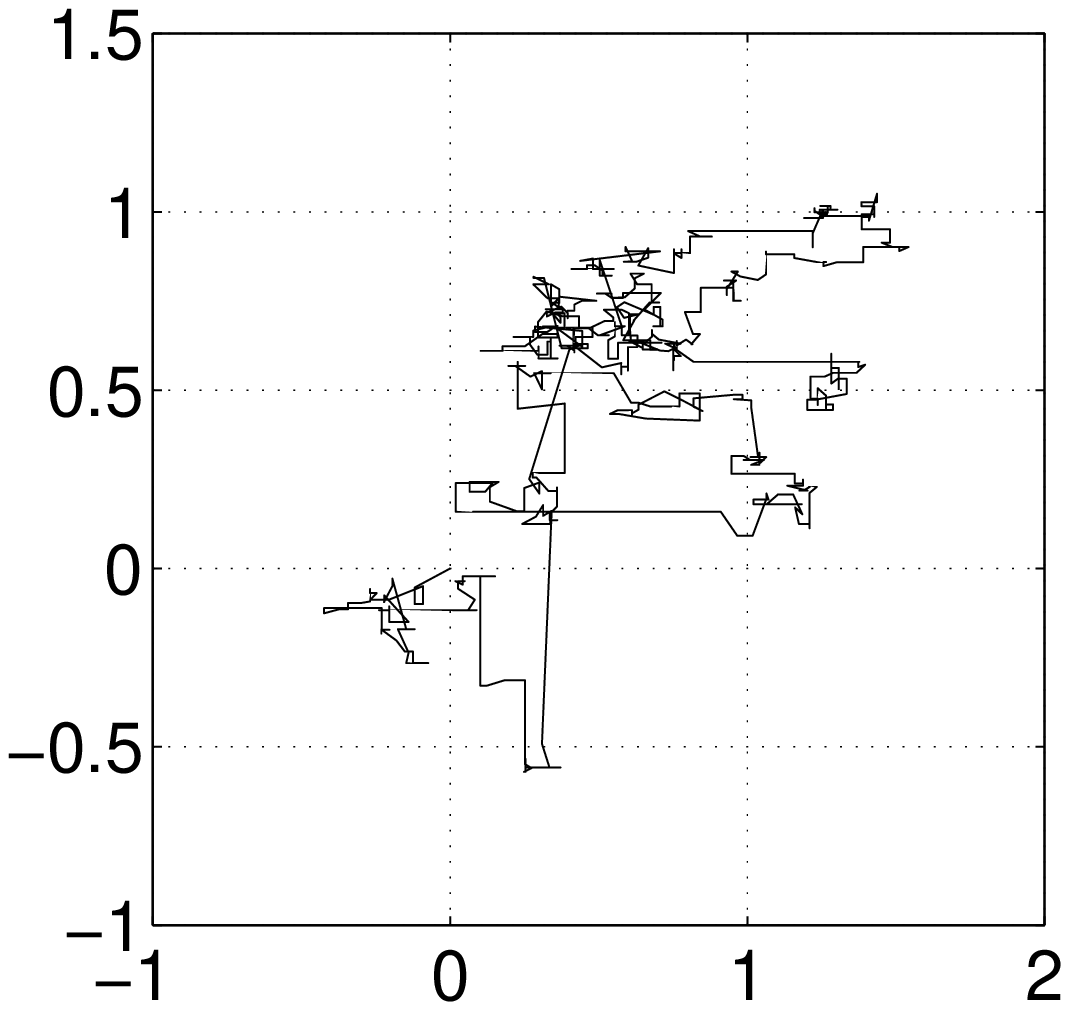}
\includegraphics[width=2.9in]{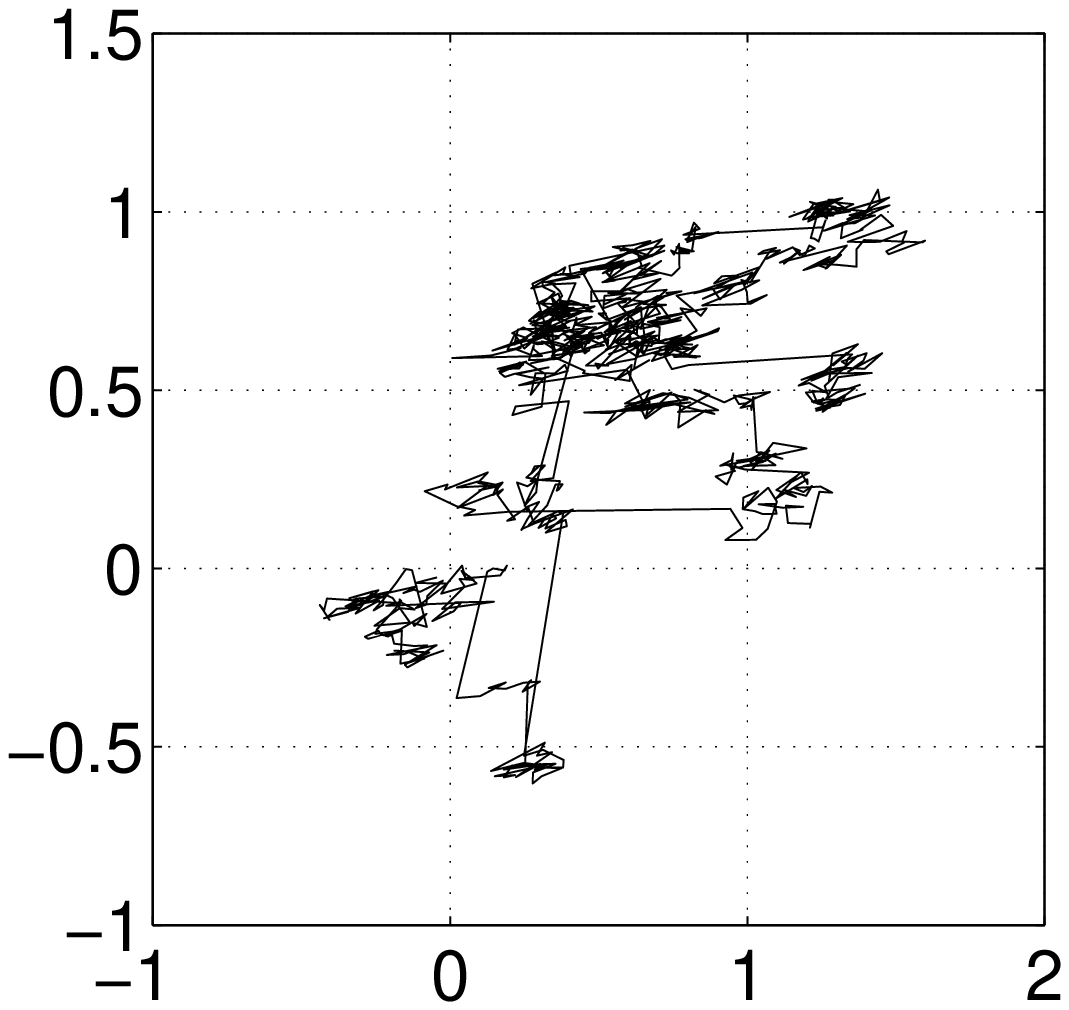}
\includegraphics[width=2.9in]{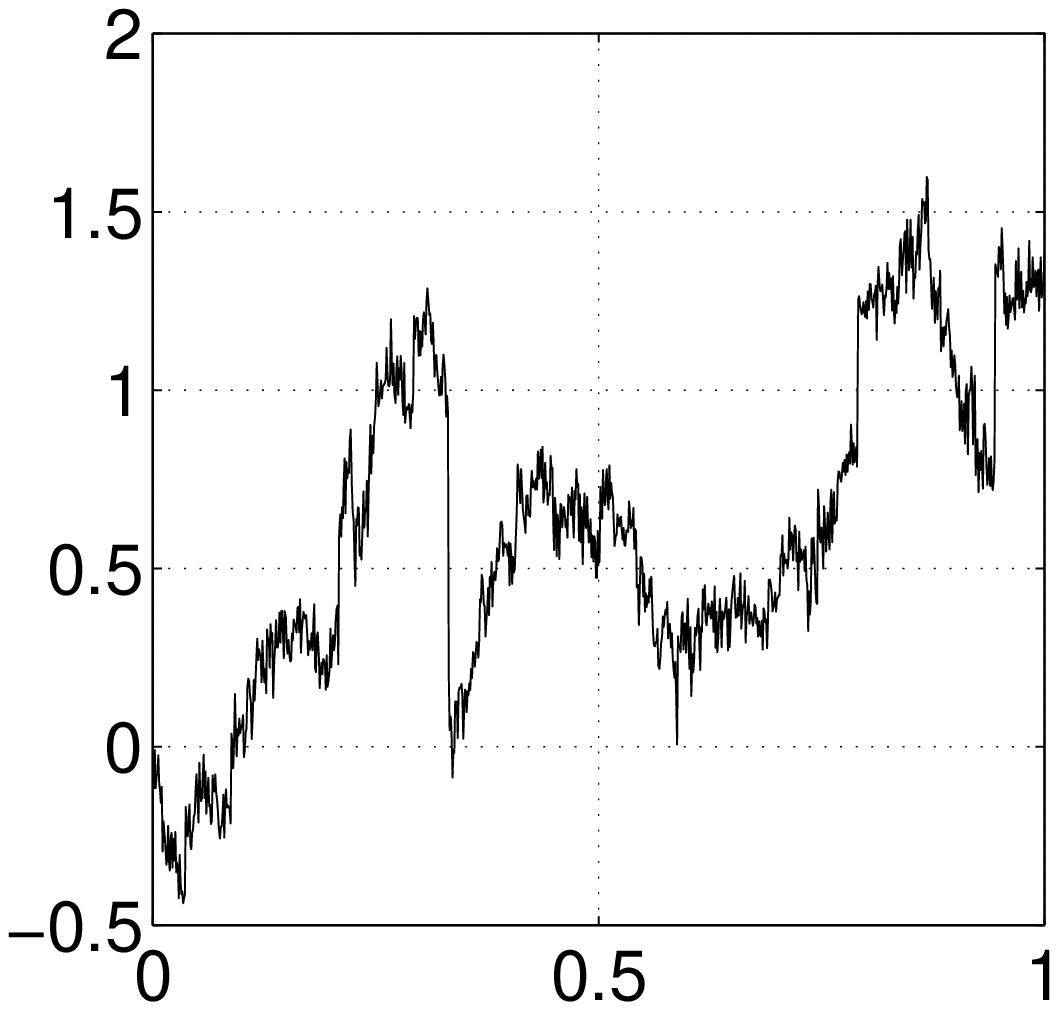}
\includegraphics[width=2.9in]{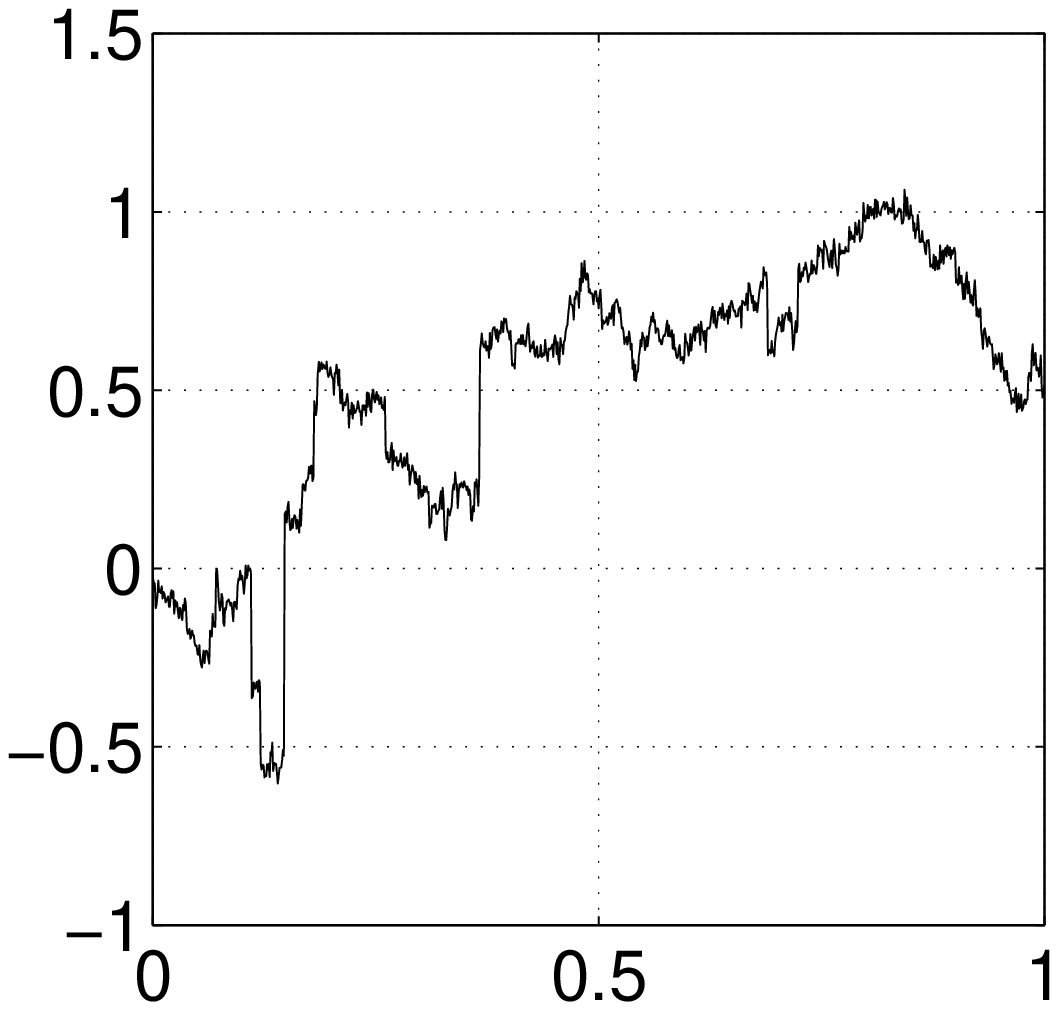}
\caption{Simulated operator stable process for Example \ref{ExA}, with independent symmetric stable marginals.  Top left panel shows the sample path of the shot noise process $M^\epsilon(t)$, and top right panel shows the corresponding operator stable process $X(t)$.  Bottom left panel shows the marginal process $X_1(t)$, and bottom right panel shows $X_2(t)$. }
\end{center}
\end{figure}

\begin{example}\label{ExA}  Equation \eqref{eq:Ze} was used to simulate an operator stable process $X(t)$ whose exponent is diagonal
\[B=\left[\begin{array}{cc} 1/1.8 & 0 \\0 & 1/1.5\end{array}\right] = {\rm diag}(b_1,b_2)\]
so that $Be_i=b_ie_i$ with $b_1=1/1.8$ and $b_2=1/1.5$.
Since the exponent is already in Jordan form, we can take $\|x\|_B$ to be the usual Euclidean norm, so that $S_B$ is the unit circle.  We choose the spectral measure $\lambda$ to place equal masses of $1/4$ at the four points $\pm e_1$ and $\pm e_2$.  Then $\E[v]=0$ in \eqref{center} so that no centering is needed, as the simulated process has mean zero without any centering.   Then $\Lambda={\rm diag}(1/2,1/2)$, $\Sigma_1={\rm diag}(9/2,3/2)$, and $A_\epsilon={\rm diag}(3\sqrt {5}\sqrt [3]{10}/10,\sqrt {15}/10)$.  It is easy to see from the definition $t^B=I+B\log t+(B\log t)^2/2!+\cdots$ that $t^B={\rm diag}(t^{b_1},t^{b_2})$.  From the scaling relation \eqref{osscale2} it follows that
\[X_i(t)\dlim t^{b_i}X_i(1) .\]
  Hence the coordinate marginals are (strictly) stable with index $\alpha_1=1/b_1=1.8$ and $\alpha_2=1.5$, respectively.
The top right panel in
Figure 1 %\ref{figexA}
shows a typical sample path of the process, an irregular meandering curve punctuated by occasional large jumps.  The top left panel shows the corresponding shot noise part $N^\epsilon(t)$ before the Gaussian approximation of the small jumps is added.  Since the spectral measure is concentrated on the coordinate axes, the large jumps apparent in the sample path of
Figure 1 %\ref{figexA}
are all either horizontal or vertical.
Pruitt and Taylor \cite{PT69} showed that the Hausdorff dimension of the sample path is $\max\{\alpha_1,\alpha_2\}=1.8$ with probability one.  Since the spectral measure is concentrated on the coordinate axes, Lemma 2.3 in Meerschaert and Scheffler \cite{MAcorr} shows that the coordinates $X_1(t)$ and $X_2(t)$ are independent stable processes.
The bottom panels in
Figure 1 %\ref{figexA}
graph each marginal process.  Note that the large jumps occur at different times, reflecting the independence of the marginals.  Blumenthal and Getoor \cite{BG62} showed that the graph of the stable process $X_i(t)$ has Hausdorff dimension $2-\alpha_i$.
\end{example}

\begin{figure}\label{figexAs}
\begin{center}
\includegraphics[width=2.9in]{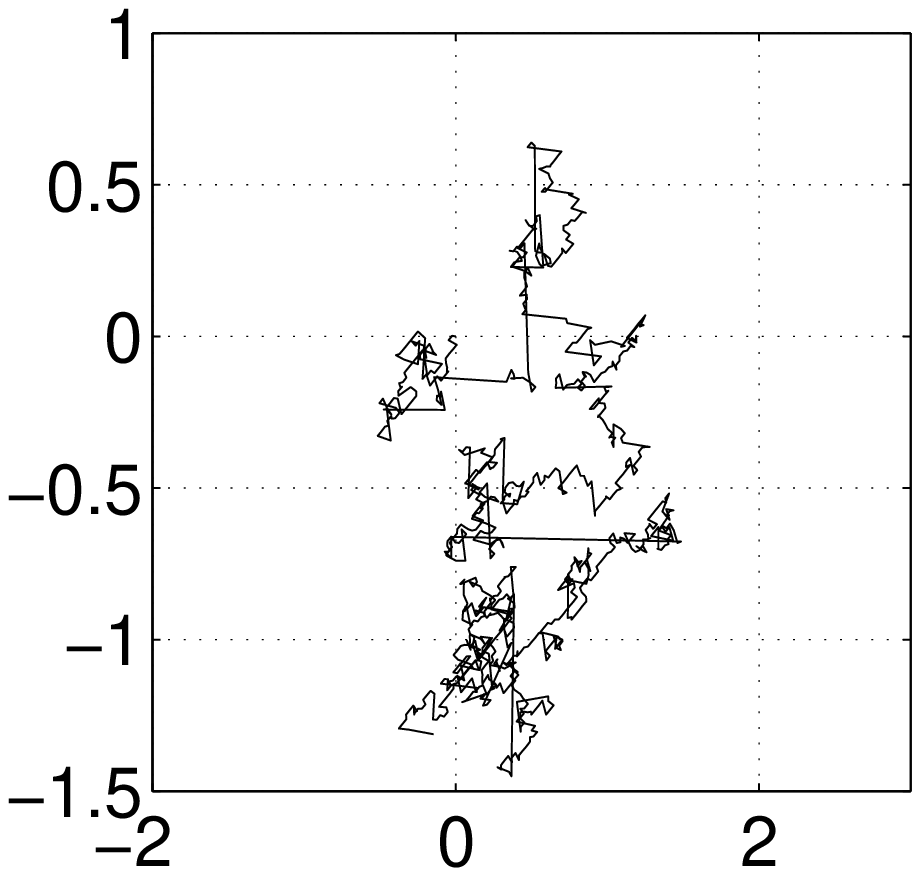}
\includegraphics[width=2.9in]{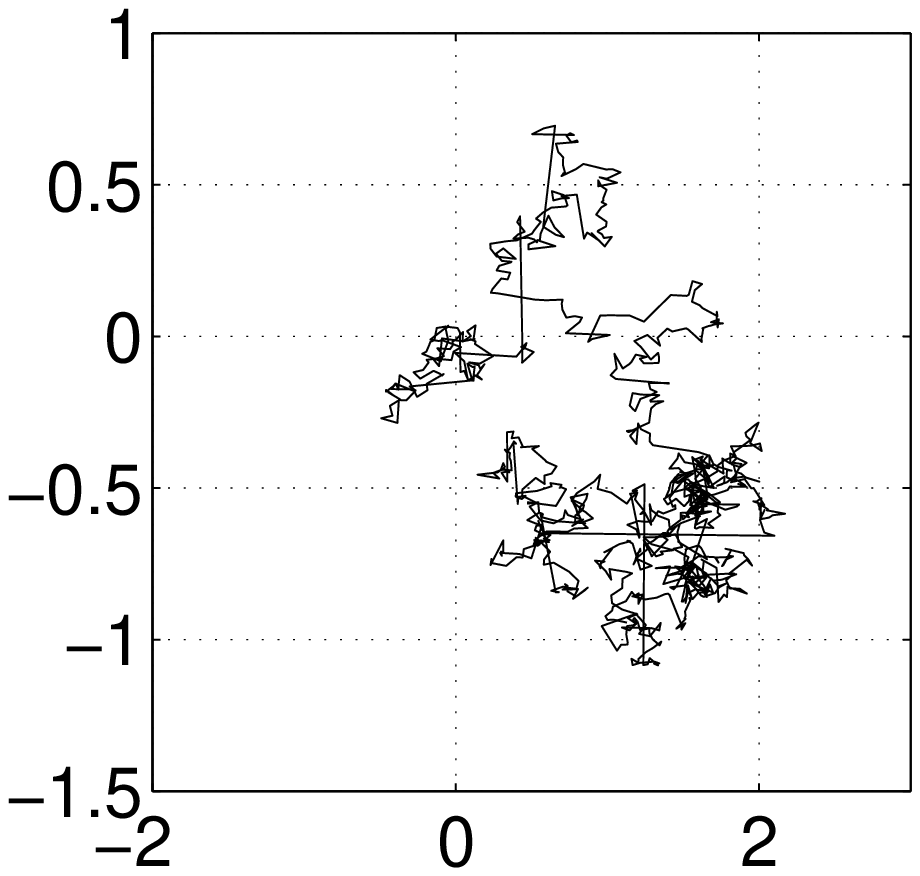}
\includegraphics[width=2.9in]{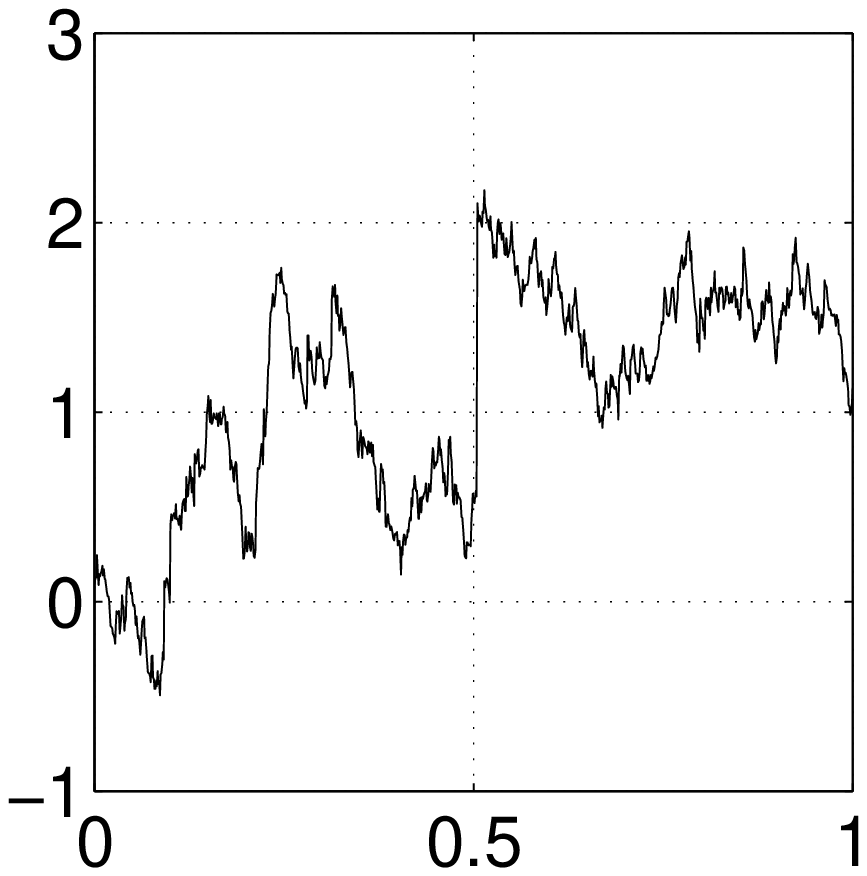}
\includegraphics[width=2.9in]{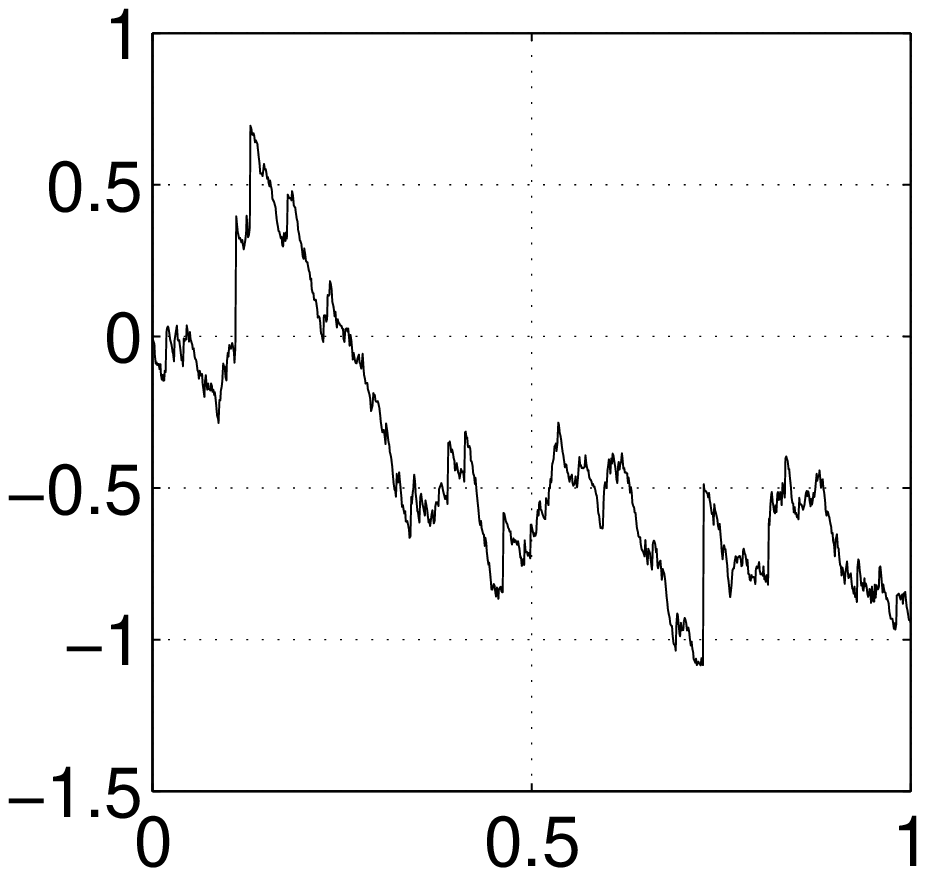}
\caption{Simulated operator stable process for Example \ref{ExAs}, with independent skewed stable marginals.  Top left panel shows the sample path of the shot noise process $M^\epsilon(t)$, and top right panel shows the corresponding operator stable process $X(t)$.  Bottom panels show the marginal processes $X_1(t)$ and $X_2(t)$.}
\end{center}
\end{figure}

\begin{example}\label{ExAs}    The same exponent $B$ is used as in Example \ref{ExA}, but now we take the spectral measure $\lambda(e_i)=1/2$.  The matrices $\Lambda$ and $A_\epsilon$ turn out to be the same as Example \ref{ExA}.  The marginals $X_i(t)$ are still stable with index $\alpha_1=1.8$ and $\alpha_2=1.5$, but they are no longer symmetric, and we center to zero expectation.  From \eqref{center} we get $a_{\epsilon}=[45\sqrt [3]{10}/4, 15]^\top$ to compensate the shot noise portion to mean zero.
Figure 2 %\ref{figexAs}
shows a typical sample path and component graphs for this process.  Since the spectral measure is concentrated on the positive coordinate axes, the large jumps apparent in the component graphs are all positive.
\end{example}

\begin{figure}\label{figexA1}
\begin{center}
\includegraphics[width=2.9in]{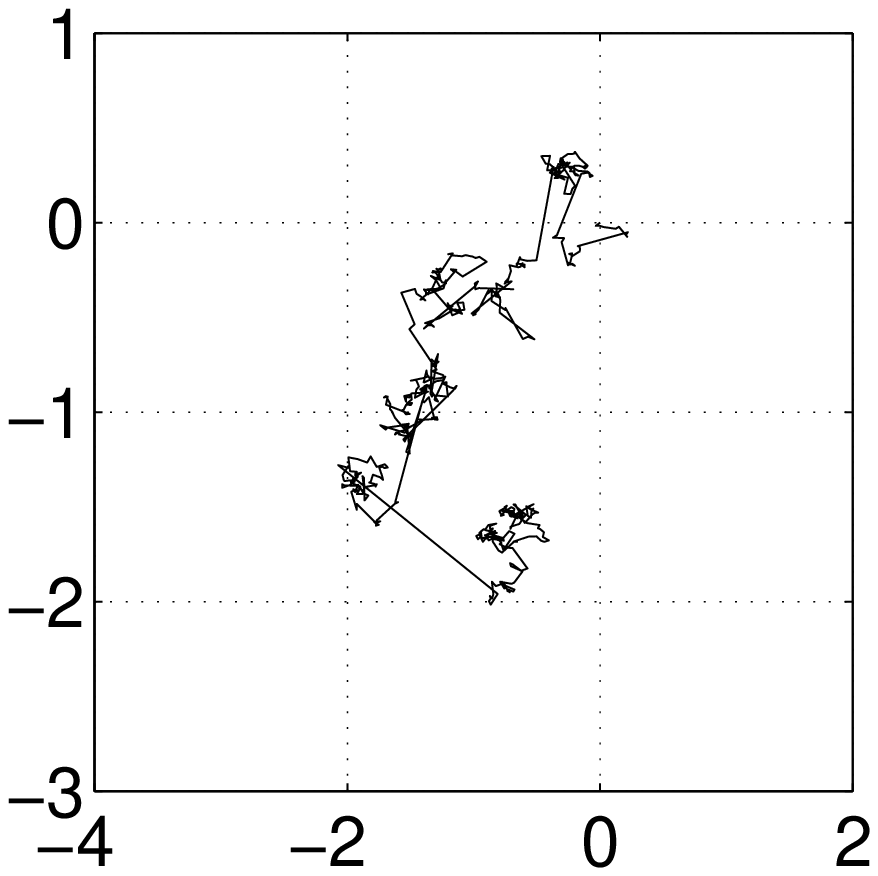}
\includegraphics[width=2.9in]{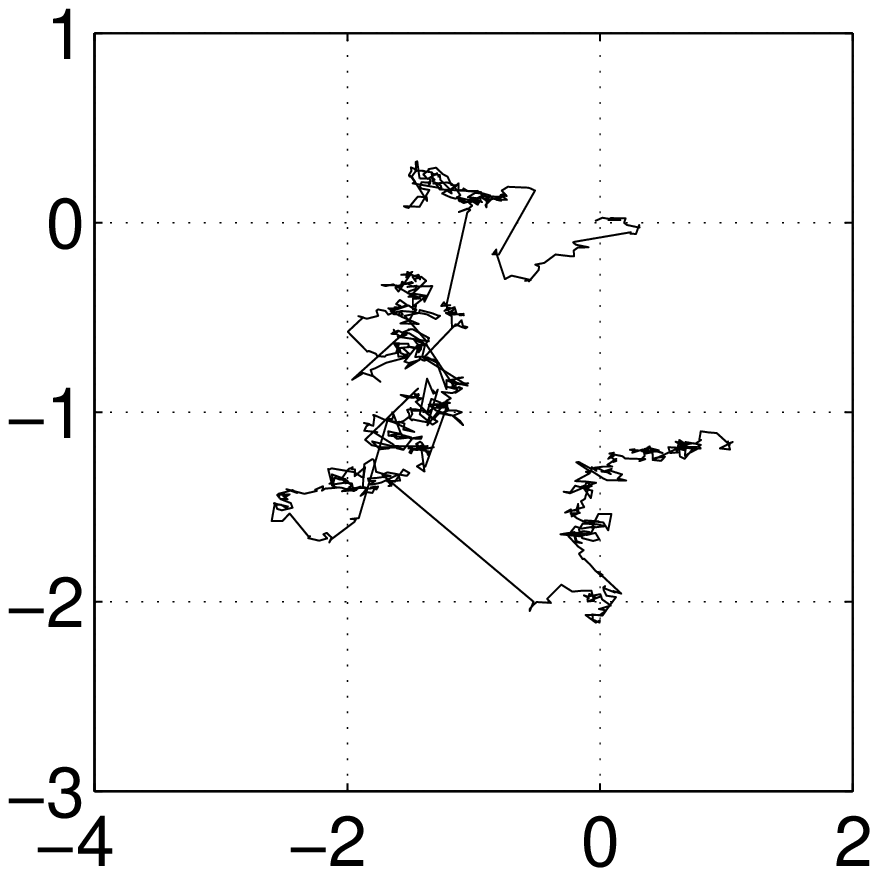}
\includegraphics[width=2.9in]{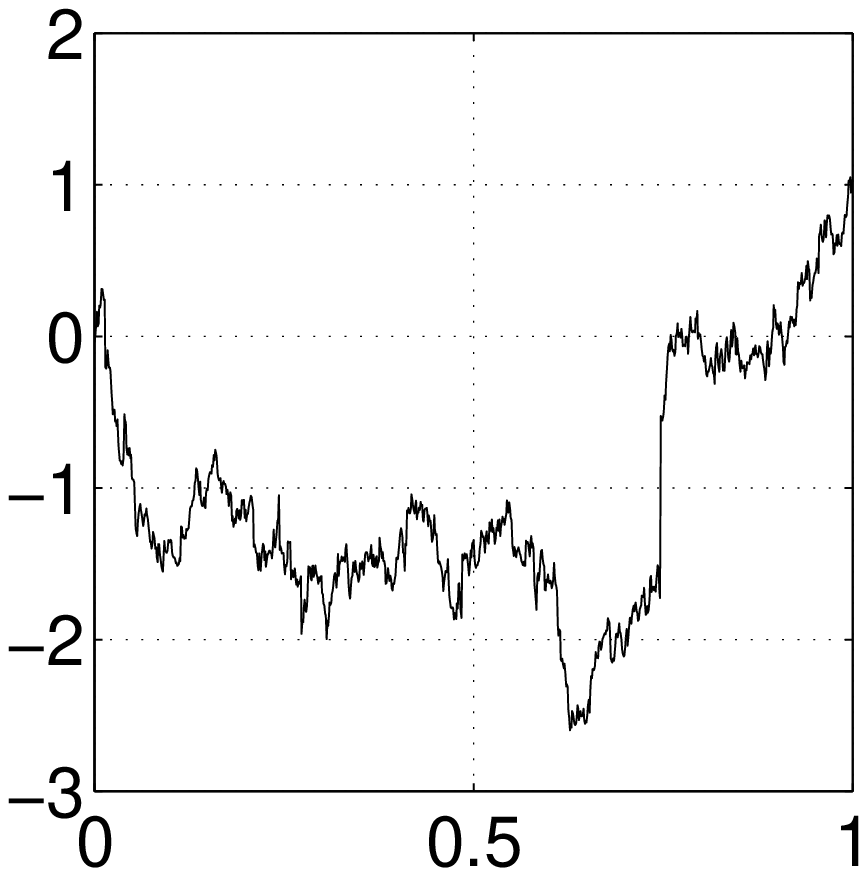}
\includegraphics[width=2.9in]{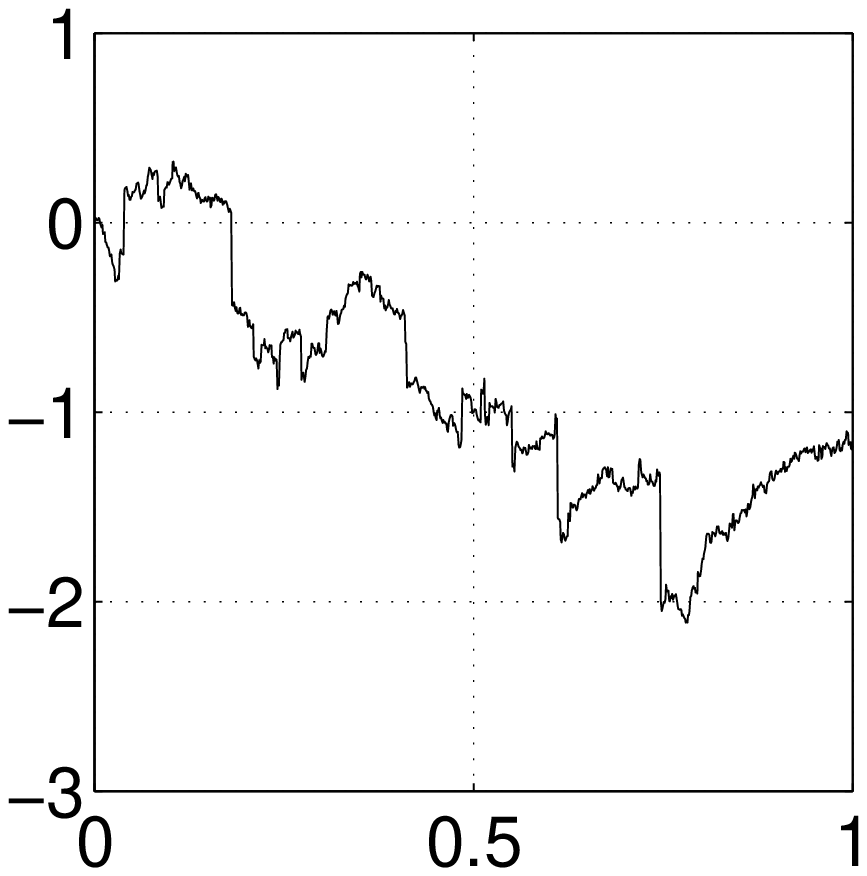}
\caption{Simulated operator stable process for Example \ref{ExA1}, with dependent stable marginals.   Top left panel shows the sample path of the shot noise process, and top right panel shows the corresponding operator stable process.   Bottom panels show the marginal processes.}
\end{center}
\end{figure}

\begin{example}\label{ExA1}    The same exponent $B$ is used as in Example \ref{ExA}, but now we take the spectral measure $\lambda$ to be uniformly distributed on the unit circle: set $v=(x^2+y^2)^{-1/2}[x,y]^\top$ where $x,y$ are independent standard normal.  The matrices $\Lambda$ and $A_\epsilon$ turn out to be the same as Example \ref{ExA}.  Since $\E[v]=0$, no centering is needed.  The marginals $X_i(t)$ are symmetric stable with index $\alpha_1=1.8$ and $\alpha_2=1.5$, but they are no longer independent.  The top panels in
Figure 3 %\ref{figexA1}
show a typical sample path of the process.  Since the spectral measure is uniform, the large jumps apparent in the sample path take a random orientation.  Theorem 3.2 in Meerschaert and Xiao \cite{MarkXiao} shows that the sample path is a random fractal, a set whose Hausdorff and packing dimension are both equal to $1.8$ with probability one.  The bottom panels in
Figure 3 %\ref{figexA1}
show the graphs of each marginal process.  Note that the large jumps in both marginals are simultaneous, reflecting the dependence.
\end{example}

\begin{figure}\label{figexB}
\begin{center}
\includegraphics[width=4in]{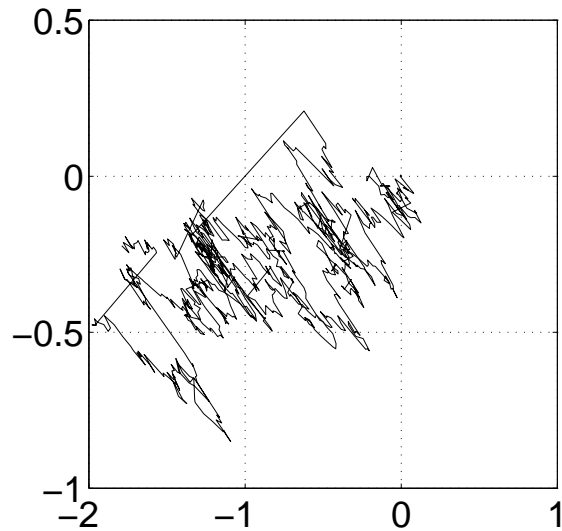}
\caption{Simulated operator stable sample path for Example \ref{ExB}, with independent skewed stable components along nonstandard coordinate axes.}
\end{center}
\end{figure}

\begin{example}\label{ExB}    Figure 2 of Zhang et al. \cite{RW2d} represents a model of contaminant transport in fractured rock.  Pollution particles travel along fractures in the rock, which form at specific angles due to the geological structure of the rock matrix.  An operator stable process $X(t)$ represents the path of a pollution particle, with independent skewed stable components in the fracture directions.  The skewness derives from the fact that particles jump forward (downstream) when mobilized by water that flows through the fractured rock.  The two components of $X(t)$ are skewed stable with index $\alpha=1.3$ on the line with angle $\theta_1=30^\circ$ measured from the positive $e_1$ axes as usual, and index 1.7 on the line with angle $\theta_2=-35^\circ$.  The two stable laws are independent.  The $e_1$ axis represents the overall direction of flow, caused by a differential in hydraulic head (pressure caused by water depth).  The exponent $B$ has one eigenvalue $b_1=1/1.3$ with associated eigenvector $v_1=R_{\theta_1}e_1=[.865,.500]^\top$, and another eigenvalue $b_2=1/1.7$ with associated eigenvector $v_2=R_{\theta_2}e_1=[.820,-.572]^\top$. The spectral measure is specified as $\lambda(v_1)=0.4$ and $\lambda(v_2)=0.6$, representing the relative fraction of jumps along each fracture direction.  In order to compute the matrix power $t^B$ a change of basis is useful.  Define the matrix $P$ according to $Pe_i=v_i$ so that
\[P=\left[\begin{array}{cc} .865 & .820 \\ .500 & -.572\end{array}\right]\]
and $D=P^{-1}BP={\rm diag}(b_1,b_2)$ is a diagonal matrix.  Then the exponent
\[B=PDP^{-1}=\left[\begin{array}{cc} .688& .142 \\.057 & .671\end{array}\right] .\]
From \eqref{eq:L} we get
\[\Lambda=\left[\begin{array}{cc} .703& -.109 \\-.109 & .297\end{array}\right] .\]
Since $t^D={\rm diag}(t^{b_1},t^{b_2})$ we can compute $t^B=Pt^DP^{-1}$ and integrate in \eqref{eq:s1} to get the Gaussian covariance matrix $\Sigma_\epsilon$
%\[\Sigma_\epsilon=\left[\begin{array}{cc} .694 &-.469 \\-.469 & .338\end{array}\right]\]
whose symmetric square root is given by
\[A_\epsilon= \left[ \begin {array}{cc}  0.723&- 0.416\\\noalign{\medskip}- 0.416&
 0.407\end {array} \right]\]
To compute the square root, we decompose $\Sigma_\epsilon=QEQ^{-1}$ where $E={\rm diag}(c_1,c_2)$, $c_i$ are the eigenvalues of $\Sigma_\epsilon$, and the columns of $Q$ are the corresponding eigenvectors, so that $A_\epsilon=QE^{1/2}Q^{-1}$ where $E^{1/2}={\rm diag}(c_1^{1/2},c_2^{1/2})$.
From \eqref{center} we get $a_{\epsilon}=[27.9, -10.1]^\top$ to compensate the shot noise portion to mean zero.
Note that $B^\top u_i=b_iu_i$ where $u_1=[.572,.820]^\top$ and $u_2=[.500,-.865]^\top$ are the dual basis vectors.  Then each projection $\ip{ X(t)}{u_i}$ is (strictly) stable with index $\alpha_i=1/b_i$, since
\[\ip{X(t)}{u_i}\dlim\ip{t^BX(1)}{u_i}=\ip{X(1)}{t^{B^\top} u_i}=\ip{X(1)}{t^{b_i} u_i}=t^{b_i}\ip{X(1)}{u_i} .\]
Hence $.572 X_1(t)+.820 X_2(t)$ is stable with index $\alpha_1=1.3$ and $.500 X_1(t)-.865 X_2(t)$ is stable with index $\alpha_2=1.7$.  Lemma 2.3 in \cite{MAcorr} shows that these two skewed stable marginals of $X(t)$ are independent, since the spectral measure is concentrated on the eigenvector coordinate axes $\ip{x}{v_i}=0$.
Figure 4 %\ref{figexB}
shows a typical sample path, along with the coordinate marginals.  Note that the large jumps lie in the $v_i$ directions.  The mean zero operator stable process $X(t)$ represents particle location in a moving coordinate system, with origin at the center of mass.  Hence
Figure 4 %\ref{figexB}
illustrates the dispersion of a typical pollution particle away from the center of mass of the contaminant plume.  Dispersion is the spreading of particles due to variations in velocity, and it is the main cause of plume spreading in ground water hydrology.
\end{example}

\begin{figure}\label{figexN}
\begin{center}
\includegraphics[width=4in]{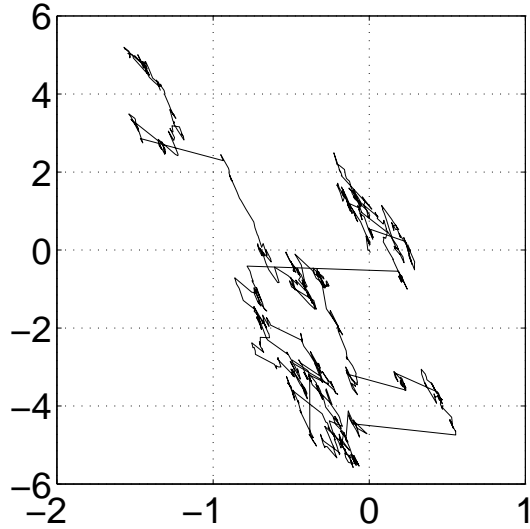}
\caption{Simulated operator stable sample path for Example \ref{ExN}, whose exponent has a nilpotent part.}
\end{center}
\end{figure}

\begin{example}\label{ExN}  We simulate an operator stable process $X(t)$ whose exponent has a nilpotent part
\[B=\left[\begin{array}{cc} 1/1.5 & 0 \\1 & 1/1.5\end{array}\right]\]
We choose the spectral measure $\lambda$ to place equal masses of $1/4$ at the four points $\pm e_1$ and $\pm e_2$.  Then $\E[v]=0$ in \eqref{center} so that no centering is needed.   Here $\Lambda={\rm diag}(1/2,1/2)$,
\[\Sigma_1=\left[\begin{array}{cc} 3/2 & -9/2 \\-9/2 & 57/2\end{array}\right] \]
and
\[A_\epsilon= \left[ \begin {array}{cc}  0.146&- 0.359\\\noalign{\medskip}-
 0.359& 4.009\end {array} \right] .\]
Note that $t^B=t^bt^N$ where $b=1/1.5$ and
\[t^N=\left[\begin{array}{cc} 1 & 0 \\ \log t & 1\end{array}\right] .\]
From \eqref{osscale2} it follows that the second marginal $X_2(t)$ is symmetric stable with index $\alpha=1/b=1.5$.  The first marginal is not stable, but it lies in the domain of attraction of a symmetric stable with index $\alpha=1.5$, see \cite[Theorem 2]{marginal}.
Figure 5 %\ref{figexA}
shows a typical sample path of the process.  The large jumps apparent in the sample path of
Figure 5 %\ref{figexA}
are all of the form $t^Bv$ where $v=\pm e_i$ and $t>0$.  Hence they are either vertical, or they lie on the curved orbits $\pm t^Be_1$.
Theorem 3.2 in \cite{MarkXiao} shows that the sample path is almost surely a random fractal with dimension $1.5$.  Lemma 2.3 in \cite{MAcorr} shows that the coordinates $X_1(t)$ and $X_2(t)$ are not independent.
\end{example}

\begin{figure}\label{figexX}
\begin{center}
\includegraphics[width=4in]{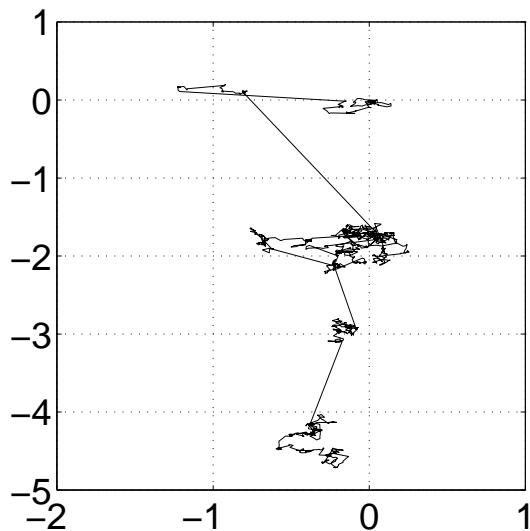}
\caption{Simulated operator stable sample path for Example \ref{ExX}, whose exponent has complex eigenvalues.}
\end{center}
\end{figure}

\begin{example}\label{ExX}  We simulate an operator stable process $X(t)$ whose exponent
\[B=\left[\begin{array}{cc} 1/1.5 & 1 \\-1 & 1/1.5\end{array}\right]\]
has complex eigenvalues $b\pm i$ with $b=1/1.5$.  We choose the spectral measure $\lambda$ to place equal masses of $1/4$ at the four points $\pm e_1$ and $\pm e_2$, so that $\E[v]=0$ in \eqref{center} and no centering is needed.   Here $\Lambda={\rm diag}(1/2,1/2)$, and
\[A_\epsilon=  \left[ \begin {array}{cc}  0.387& 0.0\\\noalign{\medskip} 0.0& 0.387
\end {array} \right] .\]
In this case, $t^B=t^b R_{\theta(t)}$ with $\theta(t)=\ln t$, since we can write $B=bI+K$ where the matrix exponential $\exp(sK)=R_s$.
The coordinate marginals $X_1(t)$ and $X_2(t)$ are not stable, but they are both semistable with index $\alpha=1/b=1.5$, see \cite[Theorem 2]{marginal}.  Lemma 2.3 in \cite{MAcorr} shows they are not independent.
Figure 6 %\ref{figexX}
shows a typical sample path of the process.  The large random jumps
are of the form $t^Bv$ where $v=\pm e_i$, so that the angle varies along with the length of the jump.  The sample path is a fractal with dimension $1.5$, see \cite[Theorem 3.2]{MarkXiao}.
\end{example}

\begin{figure}\label{figexC}
\begin{center}
\includegraphics[width=2.9in]{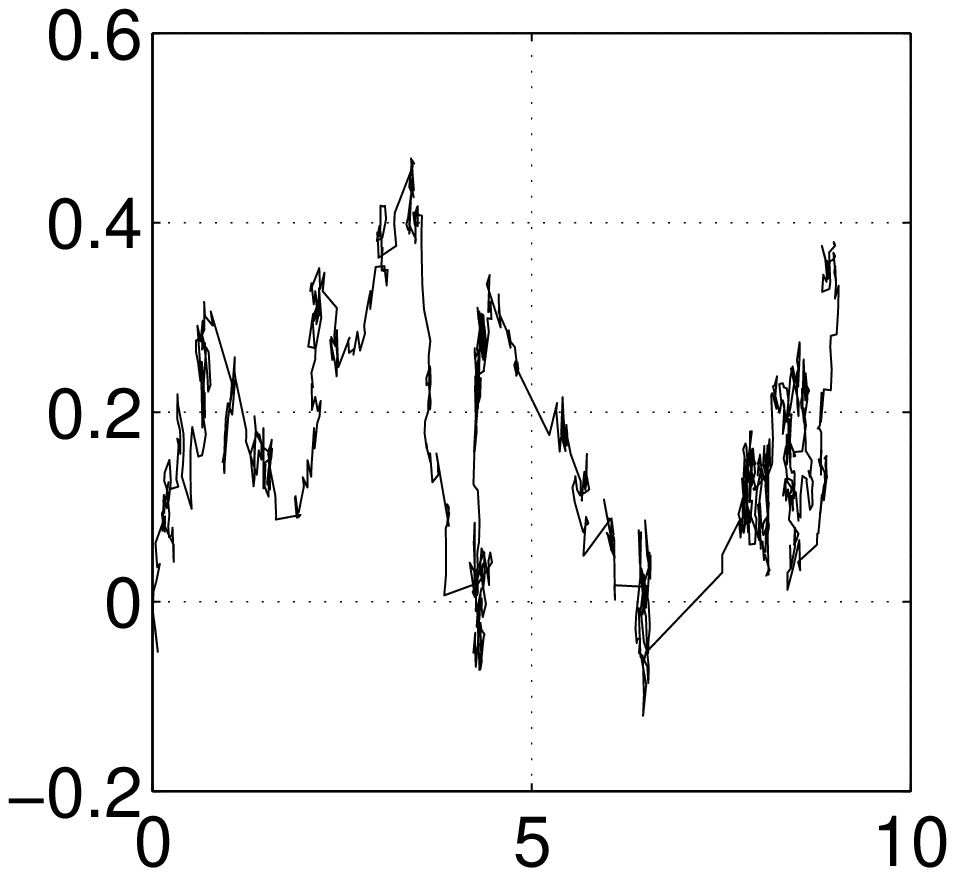}
\includegraphics[width=2.9in]{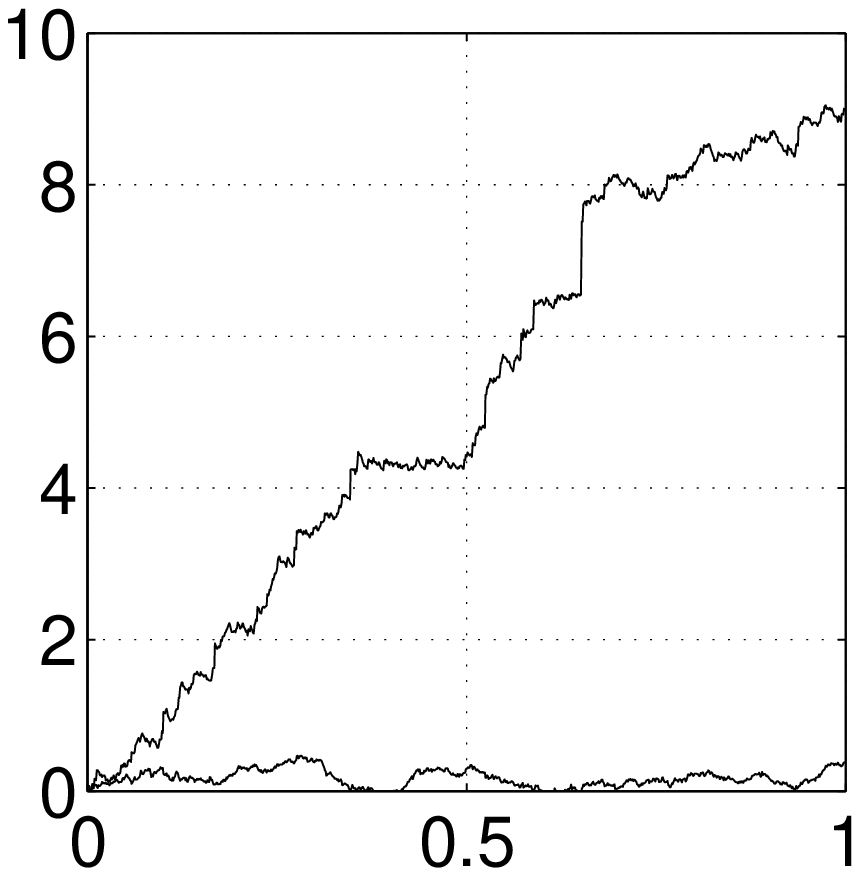}
\caption{Simulated operator stable process for Example \ref{ExC}, modeling a pollution particle moving through underground water in a heterogeneous porous medium consisting of sand, gravel, and clay. Left panel depicts the sample path of a moving particle.  Right panel shows the coordinate marginals.}
\end{center}
\end{figure}

\begin{example}\label{ExC} Figure 1 in Zhang et al. \cite{RW2d} presents an operator stable model $X(t)$ with diagonal exponent
\[B=\left[\begin{array}{cc} 1/1.5 & 0 \\0 & 1/1.9\end{array}\right] \]
and spectral measure $\lambda$ that places masses of 0.3 at $e_1$, 0.2 at $\pm 6^\circ$, 0.1 at $\pm 12^\circ$, and 0.05 at $\pm 18^\circ$ on the unit sphere in the standard Euclidean norm.  Large jumps are along the positive $x$-axis, or along the orbits $t^Bu$ where $u$ is a unit vector at $\pm 6^\circ$, $\pm 12^\circ$, or $\pm 18^\circ$, representing displacements of a pollutant particle in an underground aquifer with a mean flow in the positive $x$ direction, but some dispersion due to the intervening porous medium.  The average plume velocity is $v=[10,0]^\top$ so that $\E[X(t)]=tv$.
Figure 7 %\ref{figexC}
depicts the path of a typical particle.
Here \[\Lambda=\left[ \begin {array}{cc}  0.977& 0.0\\\noalign{\medskip} 0.0& 0.0226\end {array} \right]
\]
and
\[A_\epsilon=   \left[ \begin {array}{cc}  0.541& 0.0\\\noalign{\medskip} 0.0&
 0.546\end {array} \right] .\]
From \eqref{center} we compute $a_{\epsilon}=[29.7, 0]^\top$ and, in the simulation code, we first center to mean zero, and then add the mean velocity.
\end{example}

%Example D: This example follows the transport model number 20 for contaminant transport in complex fracture networks from Reeves et al.
% \cite{Reeves2}.  The spectral measure is defined using a Fisher density on the unit circle $\mu-\pi/2\leq\theta\leq \mu + \pi/2$:
%\begin{equation}\label{fisher}
%f(\theta;\mu,C)=K\frac{C{e^{C{\cos(\theta-\mu)}}}}{4\pi\sinh(C)} .
%\end{equation}
%Lower values of $C>0$ in (\ref{fisher})
%indicate a larger spread of mass away from the mean $\mu$.  The positive constant $K$ is chosen to
%ensure that $\int_{-\pi/2}^{\pi/2} f(\theta)d(\theta)=1$. It is common to simulate the Fisher distribution by a standard rejection method, which does % note require computing $K$ explicitly.  The exponent $B$ has eigenvectors $v_1$ and $v_2$ at $+30^\circ$ and $-60^\circ$ with eigenvectors % $b_1=1/1.1$ and $b_2=1/1.2$ respectively.  The spectral measure $\lambda(d\theta)=[0.5 f(\theta;v_1,10)+0.5 f(\theta;v_2,50)]d\theta$.

\begin{figure}\label{figexE}
\begin{center}
\includegraphics[width=4.0in]{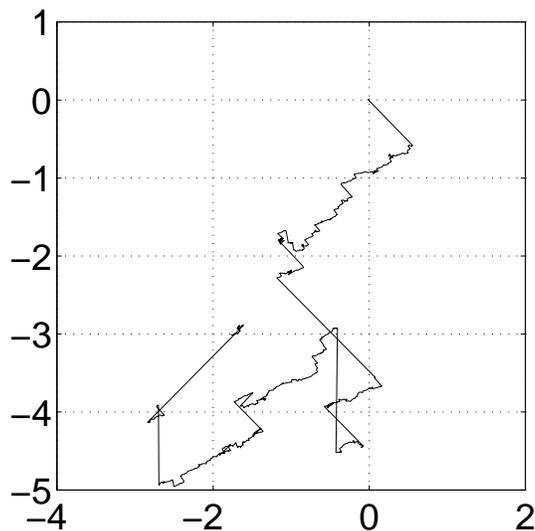}
\caption{Simulated operator stable sample path for Example \ref{ExE}, modeling the motion of a pollution particle moving through water in fractured rock. The mean zero sample path represents deviation from the plume center of mass.}
\end{center}
\end{figure}

\begin{example}\label{ExE} This example follows the transport model number 22 for contaminant transport in complex fracture networks from Reeves et al. \cite{Reeves2}.  The exponent $B$ has eigenvectors $v_1=[\sqrt{2}/2,\sqrt{2}/2]^\top$ and $v_2=[\sqrt{2}/2,-\sqrt{2}/2]^\top$ at $+ 45^\circ$ and $-45^\circ$ on the unit circle with eigenvectors $b_1=1/1.1$ and $b_2=1/1.2$ respectively.  Writing $Pe_i=v_i$ we get $D=P^{-1}BP={\rm diag}(b_1,b_2)$ so that
 \[B= PDP^{-1}=\left[ \begin {array}{cc} {{115}/{132}}&{{5}/{132}}
\\\noalign{\medskip}{{5}/{132}}&{{115}/{132}}\end {array}
 \right]
\]
Since $t^D={\rm diag}(t^{b_1},t^{b_2})$ we also have
 \[t^B= P t^D P^{-1}=\left[ \begin {array}{cc} (t^{b_1}+t^{b_2})/2&(t^{b_1}-t^{b_2})/2\\
 \noalign{\medskip}(t^{b_1}-t^{b_2})/2&(t^{b_1}+t^{b_2})/2\end {array} \right]
\]
The spectral measure has weights $0.4$ and $\pm 45^\circ$ and 0.2 at $e_2$.  The L\'evy measure is concentrated on the two straight line orbits $\{t^Bv_i:t>0\}$ and on the curved orbit $\{t^Be_2:t>0\}$.  Marginals $\ip{X(t)}{v_i}$ are stable with index $\alpha_1=1.1$ and $\alpha_2=1.2$ respectively, but they are not independent, since the spectral measure is not concentrated on the eigenvector axes.  The first marginal process $\ip{X(t)}{v_1}$ is positively skewed, since the projection of the L\'evy measure onto the first eigenvector coordinate places all mass on the positive half line.  The second marginal $\ip{X(t)}{v_2}$ is the sum of two independent stable processes, one with positive skewness resulting from the $v_2$ orbit, and one with negative skewness resulting from the projection of the $e_2$ orbit onto the negative $v_2$ axis. As in Example \ref{ExB} we compute $\Lambda={\rm diag}(0.4,0.6)$ and
\[A_\epsilon=\left[ \begin {array}{cc}  0.0603&- 0.0204\\\noalign{\medskip}- 0.0203& 0.0723\end {array} \right] .\]
From \eqref{center} we compute $a_{\epsilon}=[11.35, 4.43]^\top$ to correct the shot noise process to mean zero.
Figure 8 %\ref{figexE}
shows a typical sample path.  In this case, the sample path represents the growing deviation of a typical pollution particle from the plume center of mass.
\end{example}

\begin{figure}\label{figex2a}
\begin{center}
\rotatebox{270}{\includegraphics[width=3.0in]{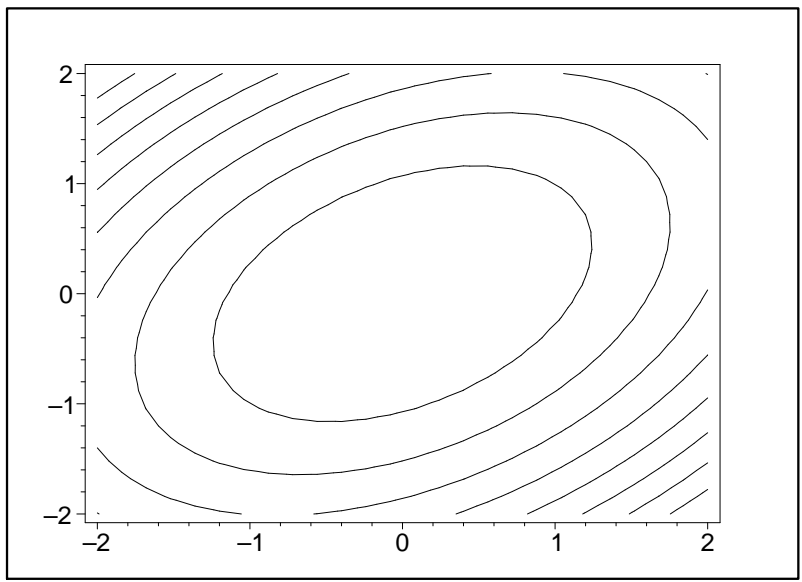}}
\caption{ Level sets of the norm $\|x\|_B$ used in example \ref{Ex2}. }
\end{center}
\end{figure}

\begin{figure}\label{figex2b}
\begin{center}
\includegraphics[width=2.9in]{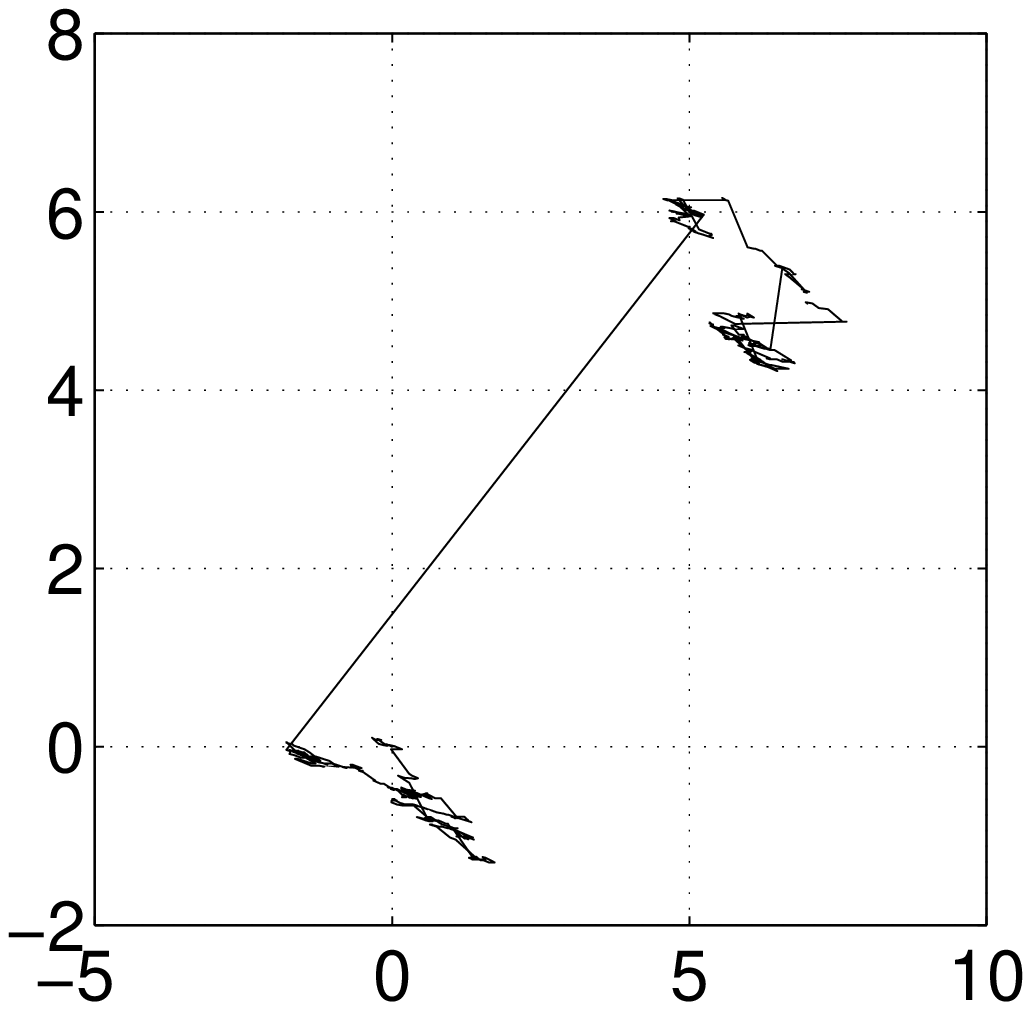}
\includegraphics[width=2.9in]{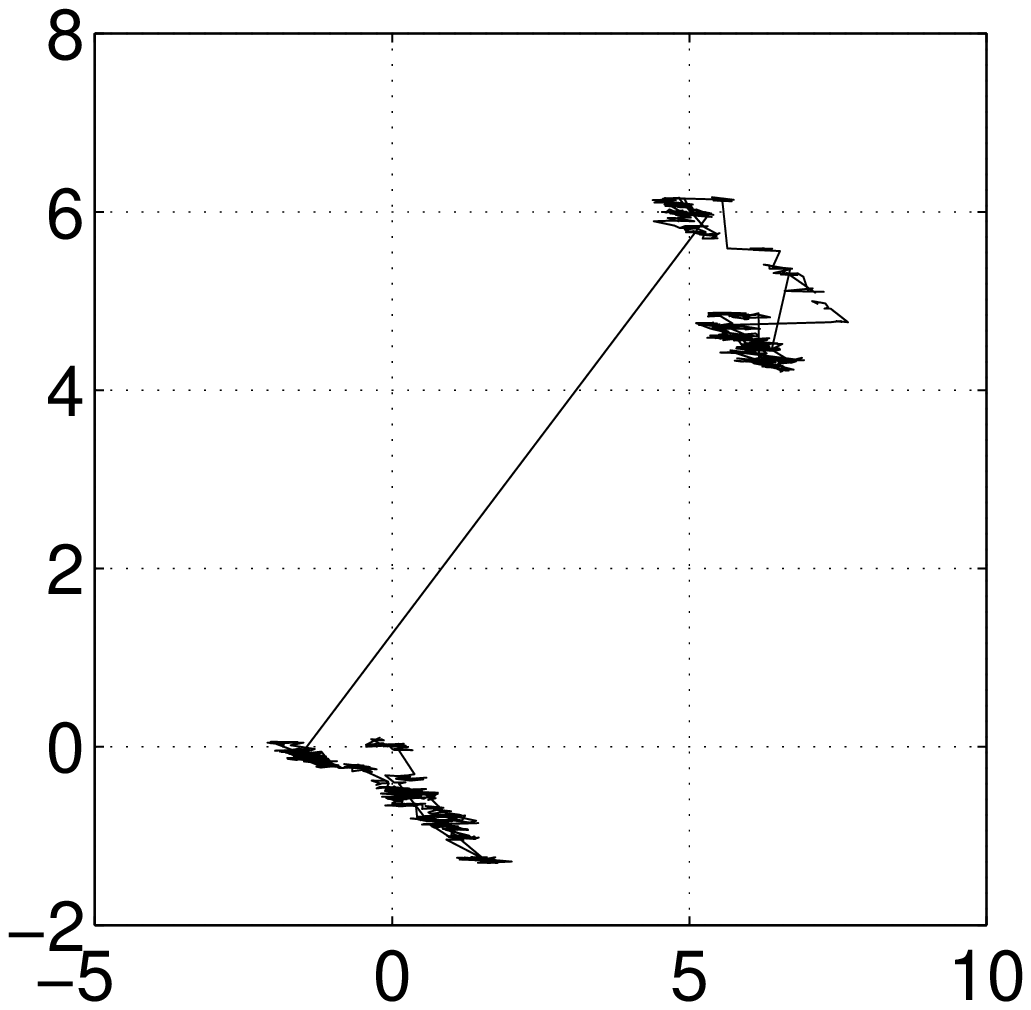}
\includegraphics[width=2.9in]{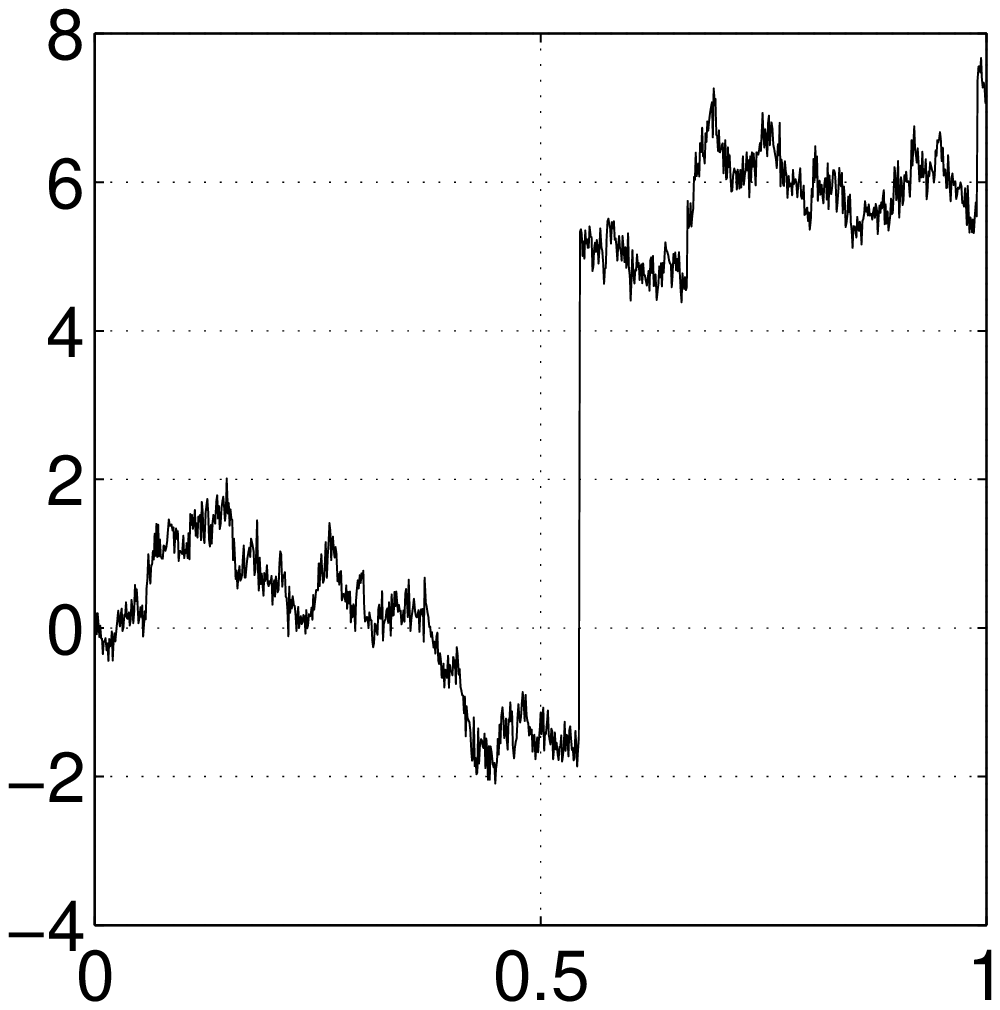}
\includegraphics[width=2.9in]{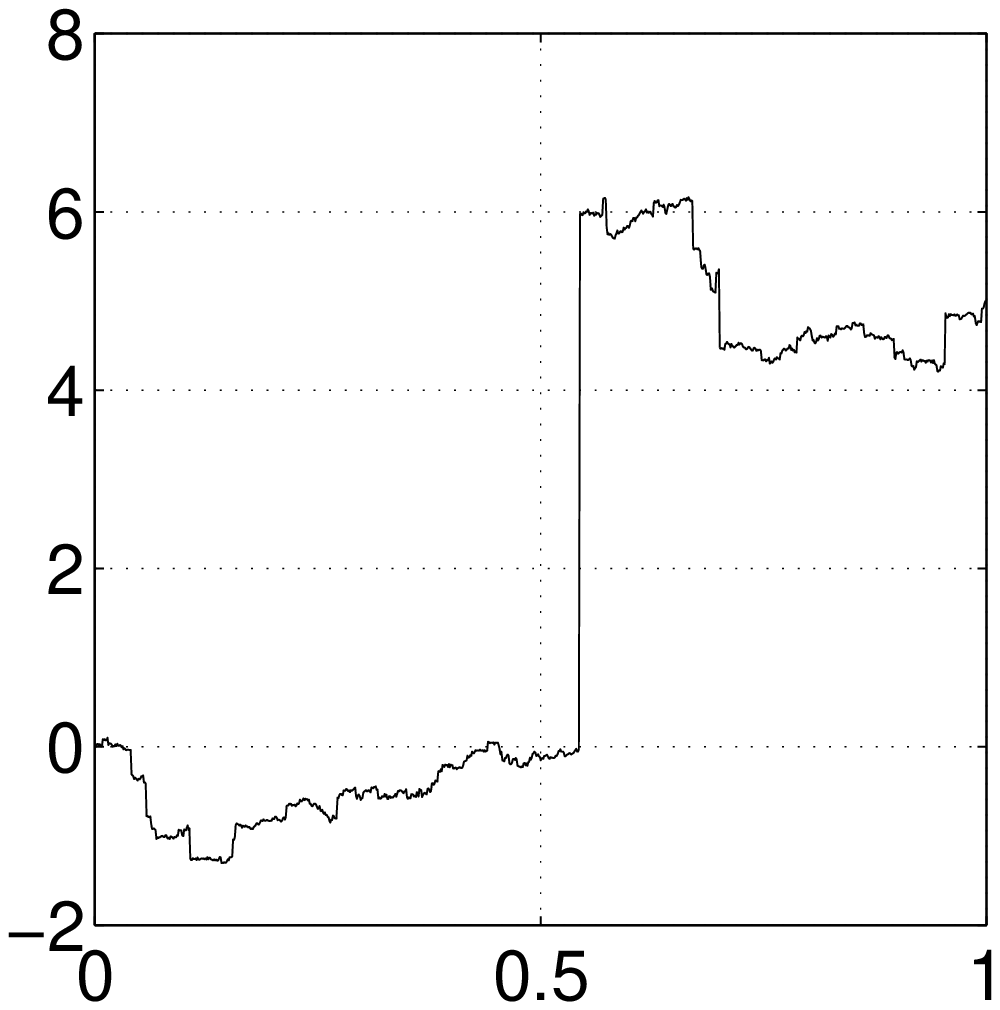}
\caption{Operator stable sample path for example \ref{Ex2}, whose exponent $B$ is not given in Jordan form.}
\end{center}
\end{figure}

\begin{example}\label{Ex2} This example is an operator stable process whose exponent
\[B=\left[\begin{array}{cc} 1/1.8 & 1/2 \\0 & 1/1.5\end{array}\right] .\]
The matrix $B$ has eigenvalue-eigenvector pairs $Bv_i=b_iv_i$ with $b_1=1/1.8$, $v_1=e_1$, $b_2=1/1.5$, and $v_2=(9/2)e_1+e_2$. As in Example \ref{ExB} we compute
%In order to compute the matrix power $t^B$ it is advantageous to write the Jordan form.  The matrix $B$ has eigenvalue-eigenvector pairs $Bv_i=b_iv_i$ %with $b_1=1/1.8$, $v_1=[1,0]^\top=e_1$, $b_2=1/1.5$, and $v_2=[9/2,1]^\top=(9/2)e_1+e_2$ so that a change of basis is useful.  Define the matrix $P$ %according to $Pe_i=v_i$ so that
%\[P:=\left[\begin{array}{cc} 1 & 9/2 \\0 & 1\end{array}\right]\quad\text{and}\quad P^{-1}=\left[\begin{array}{cc} 1 & -9/2 \\0 & %1\end{array}\right]\]
%so that
%\[D:=P^{-1}BP=\left[\begin{array}{cc} 1/1.8 &0 \\0 & 1/1.5\end{array}\right]\]
%is a diagonal matrix.  Then
%\[t^D=\left[\begin{array}{cc} t^{1/1.8} &0 \\0 & t^{1/1.5}\end{array}\right]\]
%and since $B=PDP^{-1}$ to follows from the definition $t^B=I+B+B^2/2!+\cdots$ that
\[t^B=\left[\begin{array}{cc} t^{1/1.8} & -(9/2)t^{1/1.8} +(9/2) t^{1/1.5} \\0 & t^{1/1.5}\end{array}\right]\]
We choose the norm \eqref{JMnorm} with $p=2$.  Compute $\|x\|_B^2=(9/10)x_1^2-(81/110)x_1 x_2+(903/880)x_2^2$ so that the unit sphere $S_B$ is an ellipse, whose major axis is rotated approximately $50^\circ$ counterclockwise from the $e_1$ direction.
Figure 9 %\ref{figex2a}
shows level sets of this norm.  The spectral measure $\lambda$ places equal masses of $1/4$ at each point where the unit sphere $S_B$ intersects the coordinate axes: $\pm c_i e_i$ where $c_1^2=10/9$ and $c_2^2=880/903$.
Here $\Lambda={\rm diag}(5/9,440/903)$, and
\[A_\epsilon=  \left[ \begin {array}{cc}  5.209&- 0.266\\\noalign{\medskip}- 0.266& 0.274\end {array} \right] .\]
The second coordinate $X_2(t)$ is symmetric stable with index $\alpha_2=1.5$, and the projection onto the remaining eigenvector $X_1(t)-(9/2)X_2(t)$ is stable with index $\alpha_1=1.8$.   These two stable marginals of $X(t)$ are not independent, since the spectral measure is not concentrated on the eigenvector axes.
Figure 10 %\ref{figex2b}
shows a typical sample path for this process.  The large jumps of the process are all of the form $t^{-B}v$ where $v=\pm c_1 e_1$ or $v=\pm c_2 e_2$, since we have concentrated the spectral measure at these points.  If $v=\pm c_1 e_1$ then, since $e_1$ is an eigenvector of $B$ (and hence of $t^B$), these jumps will be in the horizontal.  The remaining jumps lie along the orbits $\{\pm t^B c_2 e_2:t>0\}$.
\end{example}

\bigskip

\section{Exponents and Symmetries in two dimensions}\label{sec:sym-oss}

Section \ref{sec:sim} illustrates the wide range of possibilities represented by operator stable L\'evy processes in $\R^2$.  In this section we will provide a classification of such processes, according to the type of exponent, and the symmetry group.
Let $\X=\{X(t)\}_{t\ge 0}$ be an operator stable L\'evy process with exponent $B$,
and $\mu=\mathcal{L}(X(1))$,
as in Section~\ref{sec:os}.   Since the real parts of the eigenvalues of $B$ are greater than 1/2, after a change of coordinates, if needed, the exponent assumes one the following Jordan forms
\begin{equation}\label{Bi}
B_0= b I ,  \quad
B_1=\left[\begin{array}{cc} b_1 & 0 \\0 & b_2\end{array}\right],  \quad B_2=\left[\begin{array}{cc}b & -c \\c & b\end{array}\right],  \quad
B_3=\left[\begin{array}{cc}b & 0 \\1 & b\end{array}\right]
\end{equation}
where $b, b_1, b_2 > 1/2$, $b_1 \ne b_2$, and $c\ne 0$.
If $B=B_0$, then $\X$ is a multivariable stable process with index $\alpha=1/b$, and all maximal compact subgroups of $\mathrm{GL}(\R^2)$ are admissible as $\mathcal{S}(\mu)$. A genuine operator stable L\'evy process is obtained when $B=B_i$, $i=1,2,3$. Our first question is, what are possible symmetry groups?

To deal with this question, we need to review some basic facts about subgroups of the orthogonal group
$\mathcal{O}_2$ on $\R^2$, which can be found, e.g, in \cite{Barker07}. Recall that $\mathcal{O}_2$ consists of rotations and reflections,
\[
\mathcal{O}_2=\{R_{\theta},F_{\theta}:\theta\in[0,2\pi)\},
\]
 where
 \[
R_{\theta}=\left[\begin{array}{cc}
\cos\theta & -\sin\theta\\
\sin\theta & \cos\theta\end{array}\right]\quad\mbox{}{\mathrm{and}}\quad F_{\theta}=\left[\begin{array}{cc}
\cos\theta & \sin\theta\\
\sin\theta & -\cos\theta\end{array}\right].
\]
 $R_{\theta}$ is a rotation counter-clockwise by $\theta$ and $F_{\theta}$
is a reflection through the line of angle $\theta/2$ passing through the
origin. The following rules of composition hold: $R_{\theta_{1}}R_{\theta_{2}}=R_{\theta_{1}+\theta_{2}}$,
$F_{\theta_{1}}F_{\theta_{2}}=R_{\theta_{1}-\theta_{2}}$, $R_{\theta_{1}}F_{\theta_{2}}=F_{\theta_{1}+\theta_{2}}$,
$F_{\theta_{2}}R_{\theta_{1}}=F_{\theta_{2}-\theta_{1}}$.

The group of rotations $\mathcal{O}_2^{+}=\{R_{\theta}:\theta\in[0,2\pi)\}$ is the only infinite proper compact subgroup of $\mathcal{O}_2$.
There are also only two kinds of finite subgroups
of $\mathcal{O}_2$ (modulo the orthogonal conjugacy, see \cite[Ch. VII.3]{Barker07}):
\begin{enumerate}
\item Cyclic group $\mathcal{C}_{n}=\{R_{k2\pi/n}:k=0,\dots,n-1\}$, \ $n\ge1$,
\item Dihedral group $\mathcal{D}_{n}=\{R_{k2\pi/n},F_{k2\pi/n}:k=0,\dots,n-1\}$,
\ $n\ge 1$.
\end{enumerate}
Notice that $\mathcal{C}_1= \{I\}$, $\mathcal{C}_2= \{I, -I\}$, $\mathcal{D}_1= \{I, F_0\}$, and $\mathcal{D}_2= \{I, F_0, -I, -F_0\}$, where
$$
F_{0}=\left[\begin{array}{cc}
1 & 0\\
0 & -1\end{array}\right]
$$
is the reflection with respect to the $x$-axis. We will also need $\mathcal{D}_1^*= \{I, -F_0\}$,  the group of reflection with respect to the $y$-axis, which is orthogonally conjugate to $\mathcal{D}_1$.

The next result characterizes the possible symmetries of the distribution of $X(t)$ in the truly operator stable case where $B=B_i$ in \eqref{Bi} for some $i=1,2,3$.  In view of \eqref{eq:def-op-stable-mu}, the symmetry group do not depend on $t$.  Remarkably, once the exponent takes the Jordan form, all symmetries must be orthogonal, not just conjugate to an orthogonal matrix.

\begin{theorem}\label{os2}
	Let $\X=\{X(t)\}_{t\ge 0}$ be a full operator stable L\'evy processes on $\R^2$ with an exponent $B$ in the Jordan form \eqref{Bi}, and let $\mu=\mathcal{L}(X(1))$. Then the following hold.
	\begin{itemize}
	\item[(i)]  If $B=B_1$, then $\mathcal{S}(\mu)$ is either $\mathcal{C}_1$, $\mathcal{C}_2$, $\mathcal{D}_1$,  $\mathcal{D}_1^*$, or
	$\mathcal{D}_2$.
	\item[(ii)]  If $B=B_2$, then $\mathcal{S}(\mu)$ is either $\mathcal{C}_n$, $n\ge 1$, or $\mathcal{O}_2$.
	\item[(iii)]  If $B=B_3$, then $\mathcal{S}(\mu)$ is either $\mathcal{C}_1$ or $\mathcal{C}_2$.
%		\item[(iv)] If $B=B_0$, then $\mu$ is stable with index $b=1/\alpha$. In this case $\mathcal{S}(\mu)=W^{-1}\mathcal{G}W$ for some nonsingular matrix $W$, and $\mathcal{G}$ is either $\mathcal{C}_n$, $n\ge 1$ or $\mathcal{D}_n$, $n\ge 1$, or $\mathcal{O}_2$.
	\end{itemize}
	\end{theorem}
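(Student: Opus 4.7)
The plan is to exploit two structural facts from Section~\ref{sec:os}: the description $\mathcal{E}(\mu) = B + T\mathcal{S}(\mu)$ from \eqref{exp}, which implies that every $A \in \mathcal{S}(\mu)$ yields another exponent $A^{-1}BA = B + T$ with $T \in T\mathcal{S}(\mu)$, and the fact via \eqref{sg1} that any compact subgroup of $\mathrm{GL}(\R^2)$ is orthogonally conjugate to one of $\mathcal{O}_2$, $\mathcal{O}_2^+$, $\mathcal{D}_n$, or $\mathcal{C}_n$. For each Jordan form $B_i$, I will split the analysis according to whether $\mathcal{S}(\mu)$ is finite or infinite.

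In the finite case, $T\mathcal{S}(\mu) = \{0\}$, so $B$ is the unique exponent and every $A \in \mathcal{S}(\mu)$ must commute with $B$. A direct computation of the centralizer of each Jordan form handles all three subcases. For $B_1$ with $b_1 \neq b_2$ the centralizer consists of diagonal matrices, whose only finite-order elements are $\mathrm{diag}(\pm 1, \pm 1)$, giving the five possible subgroups $\mathcal{C}_1, \mathcal{C}_2, \mathcal{D}_1, \mathcal{D}_1^*, \mathcal{D}_2$. For $B_2 = bI + cK$, where $K$ denotes the infinitesimal generator of rotations (with $\exp(\theta K) = R_\theta$), the centralizer is $\{\alpha I + \beta K\}$; its finite-order elements are exactly the rotations $R_{2\pi k/n}$, so $\mathcal{S}(\mu) = \mathcal{C}_n$. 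For the Jordan block $B_3 = bI + N$, the centralizer is $\{\alpha I + \gamma N\}$, and the binomial expansion $(\alpha I + \gamma N)^n = \alpha^n I + n\alpha^{n-1}\gamma N$ (using $N^2 = 0$) forces $\gamma = 0$ and $\alpha = \pm 1$ for finite order, leaving only $\mathcal{C}_1$ and $\mathcal{C}_2$.

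In the infinite case, since $\mathcal{O}_2^+$ is the only infinite proper compact subgroup of $\mathcal{O}_2$ and is not maximal, $\mathcal{S}(\mu)$ must equal $W^{-1}\mathcal{O}_2 W$ for some symmetric positive-definite $W$. The commuting exponent is then $B_c = \beta I$ by \eqref{expO}, so $B - \beta I \in W^{-1}\mathcal{Q}_2 W$ by \eqref{exp1}. Comparing trace and eigenvalues with each Jordan form: case~(i) fails because the real distinct eigenvalues $b_1 - \beta, b_2 - \beta$ cannot match the purely imaginary eigenvalues of a similarity-transformed skew matrix; case~(iii) fails because the nonzero nilpotent part $N$ cannot equal $W^{-1}QW$ for any nonzero skew $Q$ (whose eigenvalues are $\pm i\alpha$); only case~(ii) survives, giving $\beta = b$ and $cK = W^{-1}QW$. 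Requiring $Q$ itself to be skew then forces $W^2 K = K W^2$, and since $W^2$ is symmetric positive-definite while the centralizer of $K$ in $2\times 2$ real matrices is $\{\alpha I + \beta K\}$, a short computation shows $W^2$ must be a scalar multiple of the identity, so $W$ is a scalar and $\mathcal{S}(\mu) = \mathcal{O}_2$.

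The main obstacle I anticipate is the case~(ii) infinite analysis, specifically verifying that the conjugating matrix $W$ reduces to a scalar so the symmetry group is literally $\mathcal{O}_2$ rather than only an orthogonal conjugate of it. A secondary technical point is checking maximality of each finite candidate subgroup in case~(i) in the sense of \cite{Meerschaert95}; this holds in dimension two because the only non-maximal compact subgroup of $\mathcal{O}_2$ is the purely rotational $\mathcal{O}_2^+$, and none of the five listed candidates is a proper subgroup of another compact group with identical orbits. Combining the finite and infinite subcase analyses then yields the classification in all three cases.
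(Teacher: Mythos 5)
Your proposal is correct and follows essentially the same route as the paper's proof: split on whether $\mathcal{S}(\mu)$ is finite or infinite, use in the finite case that the exponent is unique so every symmetry lies in the centralizer of $B_i$ (then compute the three centralizers and extract the finite-order elements), and in the infinite case combine $\mathcal{S}(\mu)=W^{-1}\mathcal{O}_2W$ with $\mathcal{E}(\mu)=B+W^{-1}\mathcal{Q}_2W$ to force the commuting exponent to be $\beta I$ and eliminate $B_1$ and $B_3$ by comparing eigenvalues with those of a conjugated skew-symmetric matrix. The only divergence is your concluding step for $B_2$, where you derive $W^2K=KW^2$ from the skewness of $Q$ and the symmetry of $W$ and deduce that $W$ is scalar; this is a slightly more direct computation than the paper's casework on $R_\psi=W^{-1}R_\phi W$ with $\phi=\psi$ or $\phi=2\pi-\psi$, but it reaches the same conclusion $\mathcal{S}(\mu)=\mathcal{O}_2$.
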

\begin{proof}
%First we will prove (i)--(iii).
Suppose that $\mu$ has an exponent $B=B_i$, $i = 1, 2, 3$,  and let  $B_c$ be a commuting exponent, see Section \ref{sec:os}. If $\mathcal{S}(\mu)$ is finite, then $B_i=B_c$,  otherwise $B_c$ can be different from $B_i$.  The symmetries $\mathcal{S}(\mu)$ defined in \eqref{sg-mu} form a compact subgroup of the centralizer $C(B_c)$,
\begin{equation}\label{centr}
	\Sa(\mu) \subset C(B_c) := \{A \in \mathrm{GL}(\R^2): AB_c=B_cA\}.
\end{equation}
First consider finite symmetry groups $\Sa(\mu)$, so that $B_c=B_i$. If $i=1$,
$$
C(B_1) =  \left\{ \left[\begin{array}{cc} \alpha & 0 \\0 & \beta \end{array}\right] : \alpha \beta \ne 0 \right\},
$$
and since $\Sa(\mu)$ is finite (and thus compact),
$$
\Sa(\mu) \subset  \left\{ \left[\begin{array}{cc} \alpha & 0 \\0 & \beta \end{array}\right] : |\alpha|= |\beta|=1 \right\}.
$$
Thus $\Sa(\mu)$ is either $\mathcal{C}_1$, $\mathcal{C}_2$, $\mathcal{D}_1$,  $\mathcal{D}_1^*$, or $\mathcal{D}_2$, as claimed. If $i=2$, then
$$
C(B_2) = \left\{ \left[\begin{array}{cc} \alpha & -\beta \\\beta & \alpha \end{array}\right] : \alpha^2+ \beta^2 > 0 \right\}.
$$
Since $\Sa(\mu)$ is finite (and thus compact),
$$
\Sa(\mu) \subset \left\{ \left[\begin{array}{cc} \alpha & -\beta \\\beta & \alpha \end{array}\right] : \alpha^2+ \beta^2 = 1 \right\},
$$
so that $\Sa(\mu) = \mathcal{C}_n$,  for some  $n \ge 1$.
If $i=3$,
$$
C(B_3) =  \left\{ \left[\begin{array}{cc} \alpha & 0 \\ \beta & \alpha \end{array}\right] : \alpha \ne 0, \beta \in \R \right\},
$$
and since $\Sa(\mu)$ is finite,
$$
\Sa(\mu) \subset  \left\{ \left[\begin{array}{cc} \alpha & 0 \\0 & \alpha \end{array}\right] : |\alpha|= 1 \right\}.
$$
Thus $\Sa(\mu)$ is either $\mathcal{C}_1$ or $\mathcal{C}_2$, as claimed.

Now we consider infinite symmetry groups $\Sa(\mu)$, so that $\Sa(\mu)=W^{-1}\mathcal{O}_2W$ for some symmetric positive definite matrix $W$, see \eqref{sg1}. From \eqref{centr}, $WB_c W^{-1}$ commutes with every orthogonal transformation. Thus $WB_c W^{-1}$ is a multiple of the identity matrix,
which yields
\begin{equation}\label{Bc}
B_c  = \beta I
\end{equation}
Since $T\mathcal{O}_2= \mathcal{Q}_2$,
$$
B_i= B_c + W^{-1} K W =  W^{-1}(\beta I +K) W
$$
for some skew symmetric matrix $K$, and so
$$
B_i= \gamma W^{-1}R_{\phi} W
$$
for some $\gamma \ne 0$ and $\phi \in [0, 2\pi)$.
This equation eliminates the cases $i=1$ and $i=3$ by comparing the eigenvalues on the left and right hand side.  Thus $i=2$ and $B_2= \alpha R_\psi$ for some $\psi \in (0, \pi) \cup (\pi, 2\pi)$, from which we have
$$
\alpha R_\psi = B_2 = \gamma W^{-1}R_{\phi} W.
$$
Comparing the determinants of both sides gives $\alpha= \gamma$. Hence
$$
R_\psi = W^{-1}R_{\phi} W.
$$
Since the sets of eigenvalues of both sides of this equation must be the same, $\phi=\psi$ or $\phi= 2\pi-\psi$. If $\phi=\psi$ then $W R_{\psi}= R_{\psi}W$ for $\psi \in (0, \pi) \cup (\pi, 2\pi)$. A direct verification of this equation reveals that $W = \kappa R_{\tau}$ is a multiple of a rotation.  (In fact, $W$ is a scalar multiple of the identity, since it is also symmetric and positive definite.)  Therefore,
$$
\Sa(\mu)=  (\kappa R_{\tau})^{-1} \mathcal{O}_2 \kappa R_{\tau} = \mathcal{O}_2,
$$
as claimed. If $\phi= 2\pi-\psi$, then
$$
R_\psi = W^{-1}R_{2\pi - \psi} W = W^{-1}F_0 F_{\psi}W= W^{-1}F_0 R_{\psi}F_0W
$$
or
$$
(F_0 W)R_\psi =R_{\psi}(F_0W).
$$
By the same reason as above, one can verify that $F_0 W= \kappa R_{\tau}$ is a multiple of rotation.
Hence $W= \kappa F_{-\tau}$ and
$$
\Sa(\mu)=  (\kappa F_{-\tau})^{-1} \mathcal{O}_2 \kappa F_{-\tau} = \mathcal{O}_2.
$$
This proves that $B=B_2$ and $\Sa(\mu)= \mathcal{O}_2$ provided $\Sa(\mu)= W^{-1}\mathcal{O}_2 W$.
%This completes the proof of (i)--(iii).  Finally, (iv) merely states that $\Sa(\mu)$ must be a maximal compact subgroup of $\mathrm{GL}(\R^{2})$.
\end{proof}

\begin{remark}
Any operator stable law can be transformed to one in which the exponent takes the Jordan form \eqref{Bi} by a simple change of basis.  Theorem \ref{os2} shows that the Jordan basis renders all symmetries orthogonal, and then the Jordan form determines which symmetries are possible.  
%Since exponents are generally not unique, an important open question in the theory of operator stable laws is to determine the ``best'' exponent to parameterize the process.  In view of Theorem \ref{os2}, we can resolve this question in $\R^2$:  If $\Sa(\mu)$ is discrete, take the unique commuting exponent.  Otherwise take $B=bI$.
\end{remark}

Operator stable laws are parameterized by their exponents and spectral measures. Therefore, it is useful to have their symmetries described in terms of these parameters.  Recall that $\mathcal{O}^{+}_2$ denotes the subgroup of rotations of the orthogonal group  $\mathcal{O}_2$.

\begin{theorem}\label{MMMquery}
	Let $\X=\{X(t)\}_{t\ge 0}$ be a full operator stable L\'evy process in $\R^2$ with exponent $B$ and no Gaussian component, and let $\mu=\mathcal{L}(X(1))$. Suppose that $B$ is given in the Jordan form \eqref{Bi} and that the spectral measure $\lambda$ is determined by the polar decomposition \eqref{eq:sm} relative to $S_B=S^1$, the Euclidean unit sphere of $\R^2$.  Let $\mathcal{S}_0(\lambda)=\{A\in \mathrm{GL}(\R^{2}): A\lambda=\lambda\}$ denote the strict symmetry group of the spectral measure.
	\begin{itemize}
	\item[(a)] If $B=B_1$, then $\Sa(\mu)=\mathcal{S}_0(\lambda) \cap \mathcal{D}_2$.
%	Thus $\Sa(\mu)$ is either $\mathcal{C}_1$, $\mathcal{C}_2$, $\mathcal{D}_1$, $\mathcal{D}_1^*$, or $\mathcal{D}_2$.
	\item[(b)] If $B=B_2$, then either $\Sa(\mu)=\mathcal{S}_0(\lambda) \cap \mathcal{O}_2^{+} = \mathcal{C}_n$ for some $n \ge 1$, or $\Sa(\mu)=\mathcal{S}_0(\lambda) = \mathcal{O}_2$.  
	\item[(c)] If $B=B_3$, then $	\Sa(\mu)=\mathcal{S}_0(\lambda) \cap \mathcal{C}_2$ .
%	Thus $\Sa(\mu)$ is either $\mathcal{C}_1$ or $\mathcal{C}_2$.
%	\item[(d')] Let $B=B_0$, so that $\Sa(\mu)=W^{-1}\mathcal{G}W$ for some nonsingular matrix $W$ and $\mathcal{G}$ is either $\mathcal{C}_n$, $n\ge 1$ or $\mathcal{D}_n$, $n\ge 1$ or $\mathcal{O}_2$. Put $Y(t):=WX(t)$, $t\ge 0$. Then $\Y=\{Y(t)\}_{t\ge 0}$ is a stable L\'evy process with index $\alpha=1/b$ and spectral measure $\lambda_Y$ on $S^1$ such that
%\begin{equation}\label{SY}
%\mathcal{G} = \mathcal{S}_0(\lambda_Y) \cap \mathcal{O}_2.	
%\end{equation}
%Consequently,
%$$
%\Sa(\mu)=W^{-1}\mathcal{S}_0(\lambda_Y)W \cap W^{-1}\mathcal{O}_2W.
%$$
%	\item[(d'')] If $B=B_0$, then $\Sa(\mu)=S_0(\lambda)=W^{-1}\mathcal{G}W$ for some nonsingular matrix $W$, and $\mathcal{G}$ is either $\mathcal{C}_n$, $n\ge 1$ or $\mathcal{D}_n$, $n\ge 1$, or $\mathcal{O}_2$.
%	\item[\bf{(d)}] If $B=B_0$ then $\Sa(\mu) \cap \mathcal{O}_2= \Sa_0(\lambda) \cap \mathcal{O}_2$ and
%\begin{equation}\label{stable symm}
%	\Sa(\mu) = \{A\in \mathrm{GL}(\R^{2}): \lambda_A = \lambda \},
%\end{equation}
%	where
%\begin{equation}\label{lambdaAdef}
%	\lambda_A(E) := \int_{S^1} \1_{E}(Au/\|Au\|) \, \|Au\|^{1/b} \, \lambda(du),  \quad E \in \mathcal{B}(S^1).
%\end{equation}
\end{itemize}
\end{theorem}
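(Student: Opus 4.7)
The plan rests on three ingredients: the identity $\mathcal{S}(\mu)=\mathcal{S}_0(\nu)$ from \eqref{sm=sn}, the classification of possible symmetry groups from Theorem~\ref{os2}, and the uniqueness of the spectral measure $\lambda$ determined by $(B,\|\cdot\|_B,\nu)$ via \eqref{eq:Lm-ost}--\eqref{eq:sm}.

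The key lemma I would prove first is the following: for any $A\in\mathcal{O}_2$ that commutes with $B$, one has $A\in\mathcal{S}(\mu)$ if and only if $A\lambda=\lambda$. Indeed, since $A$ is orthogonal it preserves $S_B=S^1$, and since $AB=BA$ we have $A s^B = s^B A$; pushing $\nu$ forward by $A$ and making the change of variable $v=Au$ in \eqref{eq:Lm-ost} gives
\[
A\nu(E)=\int_{S^1}\!\int_0^\infty\mathbf{1}_E(s^B v)\,s^{-2}\,ds\,(A\lambda)(dv),
\]
so $A\nu$ has polar decomposition with exponent $B$, unit sphere $S^1$, and spectral measure $A\lambda$. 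Uniqueness of $\lambda$ together with \eqref{sm=sn} then yields the equivalence.

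A direct matrix computation identifies the orthogonal elements commuting with each Jordan exponent: $\mathcal{O}_2\cap C(B_1)=\mathcal{D}_2$, $\mathcal{O}_2\cap C(B_2)=\mathcal{O}_2^{+}$, and $\mathcal{O}_2\cap C(B_3)=\mathcal{C}_2$. Combined with Theorem~\ref{os2}, parts (a) and (c) follow at once, since the theorem already forces $\mathcal{S}(\mu)$ to sit inside $\mathcal{O}_2\cap C(B_i)$, and the lemma translates membership in $\mathcal{S}(\mu)$ into membership in $\mathcal{S}_0(\lambda)$. Part (b) splits into the two sub-cases allowed by Theorem~\ref{os2}(ii). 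If $\mathcal{S}(\mu)=\mathcal{C}_n\subset\mathcal{O}_2^{+}$, the lemma applies verbatim and gives $\mathcal{S}(\mu)=\mathcal{S}_0(\lambda)\cap\mathcal{O}_2^{+}$.

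The main obstacle is the remaining sub-case $\mathcal{S}(\mu)=\mathcal{O}_2$ in part (b), because the reflections in $\mathcal{S}(\mu)$ do not commute with $B_2$, so the lemma gives no direct handle on them. Here I would argue as follows: the rotations $\mathcal{O}_2^{+}\subset\mathcal{S}(\mu)$ do commute with $B_2$, so the lemma forces $\mathcal{O}_2^{+}\subset\mathcal{S}_0(\lambda)$; hence $\lambda$ is rotation invariant on $S^1$ and is therefore a constant multiple of arclength measure, which is automatically preserved by every reflection. This yields $\mathcal{O}_2\subset\mathcal{S}_0(\lambda)$. Conversely, any $A\in\mathcal{S}_0(\lambda)$ must preserve the support $S^1$ of $\lambda$, and an invertible linear map of $\R^2$ preserves the Euclidean unit sphere if and only if it is orthogonal; thus $\mathcal{S}_0(\lambda)\subset\mathcal{O}_2$. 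We conclude $\mathcal{S}_0(\lambda)=\mathcal{O}_2=\mathcal{S}(\mu)$, finishing~(b).
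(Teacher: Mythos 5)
Your proof is correct and follows essentially the same route as the paper: your key lemma is exactly the paper's identity \eqref{S2}, established the same way (push $\nu$ forward by an orthogonal $A$ commuting with $B$ and invoke uniqueness of the spectral measure in the polar decomposition), and it is then combined with Theorem~\ref{os2} and the computation of $\mathcal{O}_2\cap C(B_i)$ in the same manner. The only divergence is the sub-case $\Sa(\mu)=\mathcal{O}_2$ of part (b), where the paper concludes $\mathcal{S}_0(\lambda)=\mathcal{O}_2$ by appealing to maximality of symmetry groups (Theorem 2 of \cite{Meerschaert95}), while you argue directly that a rotation-invariant finite measure on $S^1$ is a multiple of arclength and that an invertible linear map preserving $S^1$ must be orthogonal --- a slightly more elementary, self-contained alternative that is equally valid.
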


\begin{proof}
Let $\nu$ be the L\'evy measure of $\mu$. Since $\mu$ does not have a Gaussian part, we have
\begin{equation}\label{sss}
\Sa(\mu)= \mathcal{S}_0(\nu)=\{A\in \mathrm{GL}(\R^{2}): A\nu=\nu\}	
\end{equation}
as in \eqref{sm=sn}.  First we will show that if $B=B_i$, $i=1,2,3$ and $\Sa(\mu)$ is finite, then
\begin{equation}\label{S2}
	\Sa(\mu)=\mathcal{S}_0(\lambda) \cap \{A\in \mathcal{O}_2: AB=BA\}.
\end{equation}
Indeed, recall \eqref{eq:Lm-ost} and \eqref{eq:sm} for $S_B=S^1$:
\begin{equation*}\begin{split}
\nu(E)&=\int_{S^1}\int_{0}^{\infty}\mathbf{1}_{E}(s^{B}u)s^{-2}\, ds\lambda(du) ,
 \qquad \text{$E \in \mathcal{B}(\mathbb{R}^{2})$, where}\\
\lambda(F)&=\nu(\{x: x=t^Bu, \ \text{for some } (t,u) \in [1,\infty)\times F \}),  \quad F \in \mathcal{B}(S^1).
\end{split}\end{equation*}
Let $A \in \Sa(\mu)$,  $\Sa(\mu)$ being finite. Then $A\in \mathcal{O}_2$ by Theorem \ref{os2} and $A$ commutes with $B$. For every $F \in \mathcal{B}(S^1)$, $A^{-1}F \in \mathcal{B}(S^1)$ and
\begin{align*}
	 \lambda(A^{-1}F) &= \nu(\{x: x=t^BA^{-1}v, \ \text{for some } (t,v) \in [1,\infty)\times F \})\\
	  &= \nu(A^{-1}\{x: x=t^Bv, \ \text{for some } (t,v) \in [1,\infty)\times F \}) = \lambda(F)
\end{align*}
because $\Sa(\mu)=\mathcal{S}_0(\nu)$ from \eqref{sss}. Hence $A \in \mathcal{S}_0(\lambda)$.  The proof of the opposite inclusion in \eqref{S2} uses similar arguments and is omitted.

{\it Proof of (a).} A direct verification shows that $B_1$ commutes with $\mathcal{D}_2$.
Thus by \eqref{S2}
$$
\mathcal{S}_0(\lambda) \supset \Sa(\mu) \supset \mathcal{S}_0(\lambda) \cap \mathcal{D}_2.
$$
Since $\Sa(\mu) \subset \mathcal{D}_2$ by Theorem \ref{os2}, we get (a).
\medskip

{\it Proof of (b).} By Theorem \ref{os2} $\Sa(\mu)=\mathcal{C}_n$ for some $n \ge 1$, or $\Sa(\mu)=\mathcal{O}_2$. Suppose that $\Sa(\mu)=\mathcal{C}_n$. Since $\mathcal{O}_2^+$ commutes with $B_2$, by \eqref{S2} we have
$$
\mathcal{S}_0(\lambda) \supset \Sa(\mu) \supset \mathcal{S}_0(\lambda) \cap \mathcal{O}_2^+.
$$
Thus $\Sa(\mu) = \mathcal{S}_0(\lambda) \cap \mathcal{O}_2^+ =\mathcal{C}_n$.

Suppose $\Sa(\mu)=\mathcal{O}_2$. Then $R_{\theta} \in \mathcal{S}_0(\nu)$ for every $\theta$ by \eqref{sss}. Since $R_{\theta}$ commutes with $B_2$, $R_{\theta} \in \mathcal{S}_0(\lambda)$ by the same line of arguments as in the proof of \eqref{S2}. Hence $\mathcal{S}_0(\lambda) \supset \mathcal{O}^+_2$, which implies that $\lambda$ is a finite full measure in $\R^2$. Then $\lambda$ is a constant multiple of a probability measure, so $\mathcal{S}_0(\lambda)$ must be maximal by \cite[Theorem 2]{Meerschaert95}, and hence $\mathcal{S}_0(\lambda)= \mathcal{O}_2$.

\medskip

{\it Proof of (c).} It follows from \eqref{S2} because $\mathcal{C}_2$ obviously commutes with $B_3$.
\end{proof}

\begin{remark}\label{OSexists}
With the help of Theorem \ref{MMMquery}, it is possible to explicitly construct an operator stable process with any given exponent $B_i$ for $i=1,2,3$ in the Jordan form \eqref{Bi} and any admissible symmetry group.  For example, let  $\lambda$ be concentrated at four points $(\pm 2^{-1/2}, \pm 2^{-1/2})$. Choosing masses at these points appropriately, any subgroup of $\mathcal{D}_2$ is realized as  $\mathcal{S}_0(\lambda)$. By Theorem \ref{MMMquery}, all cases of $\Sa(\mu)$ are realized by this example  when $B=B_1$ and $B=B_3$. When $B=B_2$, we only get $\mathcal{C}_1$ and $\mathcal{C}_2$. To get $\mathcal{S}(\mu)=\mathcal{C}_n$, $n\ge 3$, we take $\lambda$ concentrated at vertices of a regular $n$-gon inscribed into the unit circle with one vertex at $(1,0)$ and equal masses at all the vertices. Then $\mathcal{S}_0(\lambda)= \mathcal{D}_n$, so by Theorem~\ref{MMMquery}, $\Sa(\mu)= \mathcal{D}_n \cap \mathcal{O}_2^+ = \mathcal{C}_n$.
$\Sa(\mu)=\mathcal{O}_2$ when $B=B_2$ and $\lambda$ is a uniform measure on $S^1$.
%In the remaining case where $B=bI$, we have $\mathcal{S}(\mu)=\mathcal{S}_0(\lambda)$, and it is straightforward to construct an example with any possible symmetry group.  In summary, we have shown that  $\Sa(\mu)$ can take every possible form listed
%in Theorem \ref{os2}.  Thus Theorem \ref{os2} completely characterizes the possible symmetries of a two dimensional operator stable law.
\end{remark}

%\begin{remark}\label{stable symmetries}
%If $B=B_0$, then the relation between $\Sa_0(\lambda)$ and $\Sa(\mu)$ is quite intricate. Indeed, let $\lambda = p (\delta_{e_1}+ \delta_{-e_1}) + q (\delta_{e_2}+ \delta_{-e_2}) $, $p\ne q$, $p, q>0$. Then $\Sa_0(\lambda)=\mathcal{D}_2$ but $\Sa(\mu)$ is conjugate to $\mathcal{D}_4$. The former claim is obvious.  The latter one can be verified directly from \eqref{stable symm}. Indeed, if $\lambda_A = \lambda$, then either $Ae_i/\|Ae_i\|=\epsilon_i e_i$ and $\|Ae_i\|=1$, $i=1,2$ and $\epsilon_i=\pm 1$, or  $Ae_1/\|Ae_1\|=\epsilon_2 e_2$, $Ae_2/\|Ae_2\|=\epsilon_1 e_1$, $p\|Ae_1\|^{1/b}=q$, and $q\|Ae_2\|^{1/b}=p$. In the first case we get $A \in \mathcal{D}_2$. In the second case, $Ae_1=\epsilon_2 r e_2$, $Ae_2=\epsilon_1 r^{-1} e_1$, where $r=(q/p)^{b}$ is given and $\epsilon_1, \epsilon_2 = \pm 1$. Conversely, if $A$ has one of the above forms, then $\lambda_A=\lambda$. This describes $\Sa(\mu)$. A direct verification shows that $\Sa(\mu)=W^{-1}\mathcal{D}_4 W$, where
%$$
%W = \left[\begin{array}{cc} r^{1/2} & 0 \\0 & r^{-1/2}\end{array}\right].
%$$
%Thus $\Sa(\mu)$ has more elements than $\Sa_0(\lambda)$, which is quite opposite to the cases $B=B_i$, $i=1,2,3$.
%This example shows why we need more elaborate formula \eqref{stable symm} for $\Sa(\mu)$ in the case $B=B_0$. Moreover, this formula is valid in any dimension $d\ge 1$.
%\end{remark}
%

\begin{remark}\label{orbits}
It is interesting to see how much an exponent affects the symmetry. Consider a measure $\lambda$ with $\mathcal{S}_0(\lambda)= \mathcal{D}_n$ described in Remark \ref{OSexists}, with $n=3$. Then, by Theorem \ref{MMMquery}, $\Sa(\mu)=\mathcal{D}_1$ when $B=B_1$, $\Sa(\mu)=\mathcal{C}_3$ when $B=B_2$, and $\Sa(\mu)=\mathcal{C}_1$ when $B=B_3$.  Figure 11
%\ref{figB1orbits}
illustrates the diagonal case $B=B_1$, in which the L\'evy measure $\nu$ in \eqref{eq:Lm-ost} is symmetric with respect to reflection about the vertical axis.  Here we take $b_1=1/1.8$ and $b_2=1/1.5$, but any case with $b_1\neq b_2$ appears similar.
%If $b_1=b_2$ (the case $B=bI$) then all three orbits are straight lines, and the symmetry group $\Sa(\mu)=\mathcal{D}_3$.
Figure 12
%\ref{figB2orbits}
illustrates the complex case $B=B_2$, where the L\'evy measure is symmetric with respect to rotations that are a multiple of $2\pi/3$.  Figure 13
%\ref{figB3orbits}
illustrates the nilpotent case $B=B_3$, and here the L\'evy measure has no nontrivial symmetries.  All three cases have the same spectral measure, but a different exponent.   Hence the spectral measure and the exponent are both important in determining the symmetries.
\end{remark}

\begin{figure}\label{figB1orbits}
\begin{center}
\rotatebox{270}{\includegraphics[width=2.9in,height=3.2in]{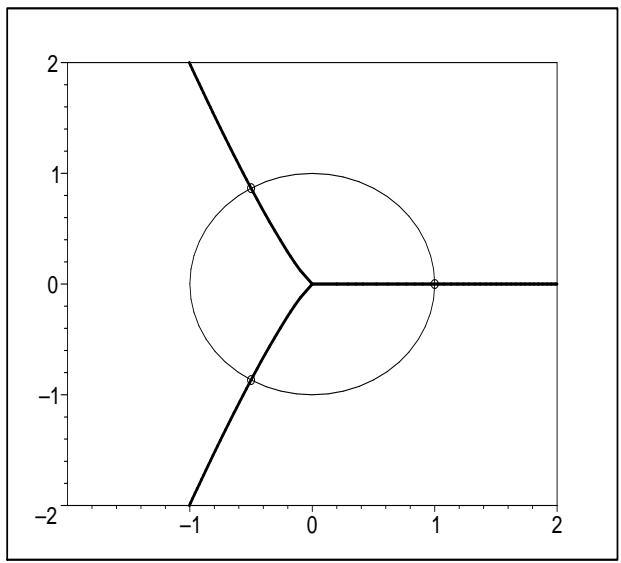}}
\caption{Support of the L\'evy measure (thick lines) for Remark \ref{orbits}, showing the effect of the exponent $B_1$ on the symmetry group.  The spectral measure gives equal weight to three equally spaced points on the unit circle, so that $\mathcal{S}_0(\lambda)= \mathcal{D}_3$.  In this case $B=B_1$, we have $\Sa(\mu)=\mathcal{D}_1$.}
\end{center}
\end{figure}

\begin{figure}\label{figB2orbits}
\begin{center}
\rotatebox{270}{\includegraphics[width=2.9in,height=3.2in]{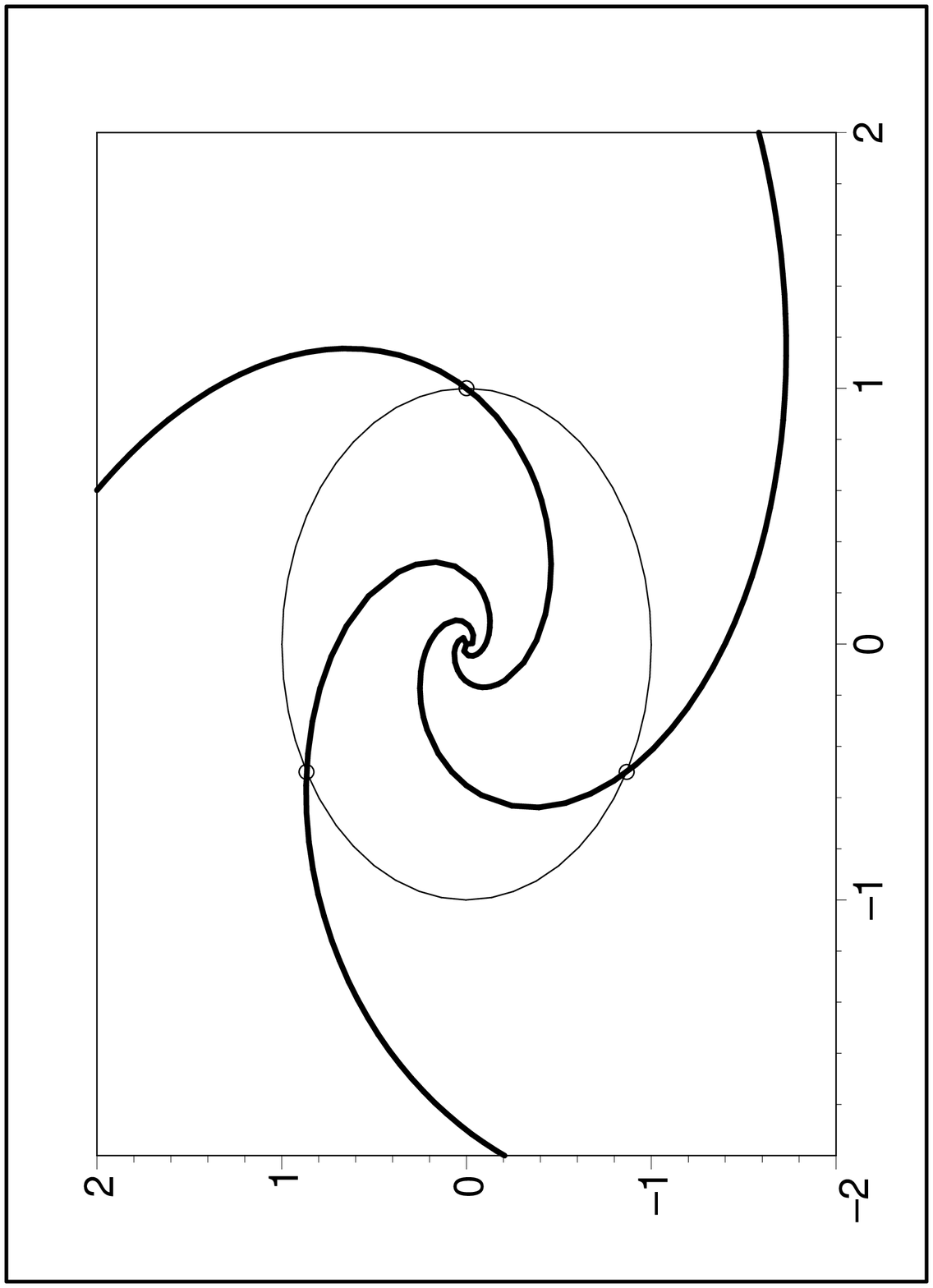}}
\caption{Support of the L\'evy measure for Remark \ref{orbits}, showing the effect of the exponent $B_2$ on the symmetry group.  Here $\Sa(\mu)=\mathcal{C}_3$.}
\end{center}
\end{figure}

\begin{figure}\label{figB3orbits}
\begin{center}
\rotatebox{270}{\includegraphics[width=2.9in,height=3.2in]{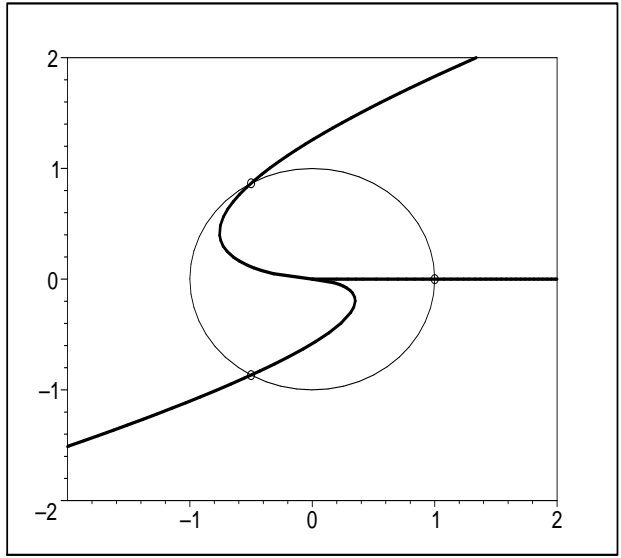}}
\caption{Support of the L\'evy measure for Remark \ref{orbits}, showing the effect of the exponent $B_3$ on the symmetry group.  Here $\Sa(\mu)=\mathcal{C}_1$.}
\end{center}
\end{figure}

\begin{remark}\label{concrete}
In order to tie the theoretical results of this section back to the concrete examples in Section \ref{sec:sim}, we compute the symmetry group $\Sa(\mu)$ for a few interesting cases.
For Example \ref{ExA} we have $\Sa(\mu)=\mathcal{S}_0(\lambda)=\mathcal{D}_2$ by Theorem \ref{MMMquery} (a), since the exponent $B=B_1$ in \eqref{Bi}, and spectral measure $\lambda$ gives equal mass to the four points $\pm e_1,\pm e_2$.
%Example \ref{ExAs} has $\Sa(\X)=\mathcal{C}_1$ since the spectral measure $\lambda$ gives unequal mass to the two points $e_1, e_2$, so that Theorem \ref{MMMquery} (a1) applies.
The spectral measure in Example \ref{ExA1} is uniform on the unit sphere, so that $\mathcal{S}_0(\lambda)=\mathcal{O}_2$, but the symmetry is of the form $B=B_1$ in \eqref{Bi}, so the symmetry group $\Sa(\mu)=\mathcal{D}_2$ by Theorem \ref{MMMquery} (a).
%For Example \ref{ExB}, note that $P^{-1}\X$ has exponent $D$ of the form $B_1$ in \eqref{Bi}.  Since the spectral measure $\lambda$ of $\X$ is
% concentrated on the two linear independent vectors $v_1,v_2$, with unequal weights, the spectral measure $P^{-1}\lambda$ of $P^{-1}\X$  gives
% unequal mass to $e_1,e_2$, so that $\mathcal{S}(P^{-1}\X)=\mathcal{C}_1$ as for Example \ref{ExAs}.  Since
% $\mathcal{S}(\X)=P\mathcal{S}(P^{-1}\X)P^{-1}$, it follows that $\mathcal{S}(\X)=\mathcal{C}_1$ as well.
The construction in Example \ref{ExN} yields $\mathcal{S}_0(\lambda)=\mathcal{D}_2$.  Then $\Sa(\mu)=\mathcal{C}_2$ since the exponent $B=B_3$ is nilpotent, by Theorem \ref{MMMquery} (b).  In Example \ref{ExX} we also have $\mathcal{S}_0(\lambda)=\mathcal{D}_2$, and then $\Sa(\mu)=\mathcal{C}_2$ by Theorem \ref{MMMquery} (c).  Example \ref{ExC} has $\Sa(\mu)=\mathcal{D}_1$ since the spectral measure is symmetric with respect to reflection across the $e_1$-axis: $B=B_1$, and $F_0\lambda=\lambda$, but $-I\lambda \neq \lambda$.
\end{remark}

\section{Operator self-similar processes}\label{xxx}

In this section, we discuss more general operator self-similar processes, whose increments need not be independent or stationary.  From now on, assume that the operator self-similar process $\X$ is proper (i.e., for every $t>0$ the smallest hyperplane supporting the distribution of $X(t)$ equals $\R^{d}$), stochastically continuous, and $X(0)=0$.  Under these assumptions, the real parts of eigenvalues of the exponent $B$ are positive \cite[Theorem 4]{Hudson82}.  Denote by $\mathcal{S}(\X)$ the set of linear operators $A$ in $\mathrm{GL}(\R^{d})$ such that
\begin{equation}
\{AX(t)\}_{t\ge0}\,\eqfd\,\{X(t)\}_{t\ge0}. \label{sg}
\end{equation}
The symmetries of $\X$ form a compact subgroup of $\mathrm{GL}(\R^{d})$ as long as $\X$ is proper.  The symmetry group can be seen as minimal information about a
multidimensional stochastic process.
Hudson and Mason \cite[Theorem 2]{Hudson82} proved that
\begin{equation}
\mathcal{E}(\X)=B+T\mathcal{S}(\X),\label{exposs}
\end{equation}
 where $B\in\mathcal{E}(\X)$ is arbitrary and $T\mathcal{S}(\X)$
is the tangent space of $\mathcal{S}(\X)$ at the identity. Maejima \cite{Maejima98} showed that one can always find a {commuting exponent} $B_c \in \mathcal{E}(\X)$ such that $AB_c=B_cA$ for all $A\in \mathcal{S}(\X)$.

A shift is included in the symmetry group $\Sa(\mu)$ defined in \eqref{sg-mu} for operator stable L\'evy processes, since the definition \eqref{eq:def-op-stable-mu} also includes a shift.  For operator self-similar processes, the definition \eqref{eq:ss} does not include a shift, so it is natural that the definition \eqref{sg} for the symmetry group $\mathcal{S}(\X)$ of an operator self-similar process does not allow a shift.  The following lemma connects $\mathcal{S}(\X)$ with $\Sa(\mu)$ in the operator stable case.

\begin{lemma}\label{}
Let $\X=\{X(t)\}_{t\ge 0}$ be a strictly operator stable L\'evy process with exponent $B$. Suppose that 1 is not an eigenvalue of $B$. Then $\mathcal{S}(\X)= \Sa(\mu)$, where $\mu=\mathcal{L}(X(1))$.
\end{lemma}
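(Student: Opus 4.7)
The plan is to prove both inclusions. The inclusion $\mathcal{S}(\X) \subseteq \Sa(\mu)$ is immediate from the definitions: if $AX \eqfd X$, then in particular $AX(1) \dlim X(1)$, so $A\mu = \mu = \mu * \delta_0$, i.e.\ $A \in \Sa(\mu)$. For the converse I fix $A \in \Sa(\mu)$, write $A\mu = \mu * \delta_{x_A}$ with $x_A \in \R^d$, and aim to show $x_A = 0$. Once this is done, $A\mu = \mu$ forces $AX(1) \dlim X(1)$, and since both $\X$ and $A\X$ are L\'evy processes, equality of the time-1 marginals propagates to equality of all finite-dimensional distributions, giving $A \in \mathcal{S}(\X)$.

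The core idea for showing $x_A=0$ is to use strict operator stability to manufacture a whole one-parameter family of elements of $\Sa(\mu)$ with explicit shift vectors. By infinite divisibility $A\mu^{*t} = (A\mu)^{*t} = \mu^{*t} * \delta_{tx_A}$, and by strict operator stability $\mu^{*t} = t^B \mu$. Combining the two and applying $t^{-B}$ to both sides yields
\[
A_t\,\mu = \mu * \delta_{t^{I-B} x_A}, \qquad A_t := t^{-B} A t^B \in \mathrm{GL}(\R^d),
\]
so $A_t \in \Sa(\mu)$ with shift vector $x_{A_t} = t^{I-B} x_A$ for every $t>0$.

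Two standard facts then come into play. First, $\Sa(\mu)$ is a compact subgroup of $\mathrm{GL}(\R^d)$, because $\mu$ is full. Second, the map $C \mapsto x_C$ is continuous on $\Sa(\mu)$: on any neighborhood of the origin where $\hat\mu$ does not vanish one has $e^{i\ip{y}{x_C}} = \hat\mu(C^\top y)/\hat\mu(y)$, which determines $x_C$ continuously from $C$. Consequently $\{x_{A_t} : t>0\} = \{t^{I-B} x_A : t>0\}$ is a bounded subset of $\R^d$.

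The decisive step, which is where the hypothesis that $1$ is not an eigenvalue of $B$ enters, is purely spectral. Under this hypothesis every eigenvalue of $I-B$ has nonzero real part; decomposing $x_A$ along the generalized eigenspaces of $I-B$ corresponding to positive and to negative real parts shows that unless $x_A = 0$, $\|t^{I-B} x_A\|$ diverges either as $t \to 0^+$ or as $t \to \infty$. Together with the boundedness obtained above, this forces $x_A = 0$ and completes the proof. I expect this spectral dichotomy to be the main obstacle, not because it is technically difficult but because it is the precise location where the hypothesis on the eigenvalues of $B$ must be used; in fact, without it one would expect counterexamples coming from nontrivial $I-B$-kernel directions.
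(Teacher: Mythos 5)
Your setup is sound up to and including the boundedness step: the identity $A_t\mu=\mu\ast\delta_{t^{I-B}x_A}$ with $A_t=t^{-B}At^{B}\in\Sa(\mu)$ is correctly derived from $\mu^{*t}=t^B\mu$, and compactness of $\Sa(\mu)$ together with continuity of $C\mapsto x_C$ does make $\{t^{I-B}x_A : t>0\}$ bounded. The gap is the final spectral claim. The hypothesis that $1$ is not an eigenvalue of $B$ guarantees only that every eigenvalue of $I-B$ is \emph{nonzero}, not that it has nonzero real part. If $B$ has an eigenvalue $1+ic$ with $c\ne 0$ (real part exactly $1$, which is admissible for an operator stable exponent; take $B_2$ in \eqref{Bi} with $b=1$), then $I-B$ has the purely imaginary eigenvalue $-ic$, and on the corresponding semisimple eigenspace $t^{I-B}$ acts as the rotation $R_{-c\ln t}$, an isometry for every $t>0$. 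There $\|t^{I-B}x_A\|=\|x_A\|$ is bounded no matter what $x_A$ is, so your dichotomy ``diverges as $t\to0^+$ or $t\to\infty$ unless $x_A=0$'' fails. What your argument actually proves is only that $x_A$ lies in the span of the generalized eigenspaces of $I-B$ for purely imaginary eigenvalues, which need not be trivial.

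The fix is to extract the \emph{exact} identity rather than mere boundedness, which is what the paper does. Choosing the exponent to commute with $\Sa(\mu)$ makes $A_t=A$, and since the shift attached to an element of $\Sa(\mu)$ is unique ($\hat\mu$ never vanishes), you get $t^{I-B}x_A=x_A$ for \emph{all} $t>0$; differentiating at $t=1$ gives $(I-B)x_A=0$, and invertibility of $I-B$ (this is exactly where ``$1$ is not an eigenvalue'' enters, at full strength) yields $x_A=0$. In the problematic rotation case this works because $R_{-c\ln t}x_A=x_A$ for all $t$ forces $x_A=0$ even though the orbit is bounded. The paper runs the equivalent computation at the level of the process, deriving $(t^{B}-tI)b=0$ for all $t>0$ directly from $AX(t)\dlim X(t)-tb$ and $X(t)\dlim t^BX(1)$. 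Your route through convolution powers and the conjugated symmetries $A_t$ is a genuinely different (and elegant) framing, but as written it discards the information in the dependence of $A_t$ on $t$, and that information is indispensable when $B$ has eigenvalues with real part equal to $1$.
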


\begin{proof}
Since $\{X(t)\}\eqfd\{AX(t)\}$ if and only if $X(1)\dlim AX(1)$, we have $\mathcal{S}(\X)=\mathcal{S}_0(\mu)$, so it suffices to show that $\Sa(\mu) =\mathcal{S}_0(\mu)$ (see definitions \eqref{sg-mu} and \eqref{ssg-mu}). Let $A \in \Sa(\mu)$, so that $AX(1)$ and $X(1) - b$ are identically distributed for some $b\in\R^d$. Since the real parts of eigenvalues of all exponents of $\mu$ are the same (see \cite[Corollary 7.2.12]{Meerschaert01}), we may take $B$ as a commuting exponent. Then, for every $t>0$ we have
\begin{align*}
AX(t) & \dlim X(t) - tb \dlim t^BX(1) - tb = t^B(AX(1) + b) - tb \\
& = At^BX(1) + t^Bb - tb \dlim AX(t) + t^Bb - tb.
\end{align*}
Thus $(t^B-t)b=0$ for all $t>0$, and since 1 is not an eigenvalue of $B$, $b=0$.
Hence $A \in \mathcal{S}_0(\mu)$. The converse inclusion, $\Sa(\mu) \supset\mathcal{S}_0(\mu)$, is obvious.
\end{proof}

\begin{remark}\label{osvsoss}
Full dimensional operator stable L\'evy processes, and proper operator self-similar processes, form two distinct classes.  Neither class is contained in the other.  Take $Z(t)$ a spherically symmetric L\'evy process on $\R^d$ whose marginals are Cauchy.  Then $b+Z(t)$ is an operator stable L\'evy process, but it is not operator self-similar.  The process $X(t)=Z(t^p)$ for $p>1$ is operator self-similar but not L\'evy.  Remark \ref{fcp} provides examples of operator self-similar processes for which none of the one-dimensional distributions are operator stable.   The process $X(t)=vt+Z(t)$ is a strictly operator stable L\'evy process and also a proper operator self-similar process.  If we take $\mu=\mathcal{L}(X(1))$ then $\Sa(\mu)={\mathcal O}_d$ but $\mathcal{S}_0(\mu)=\mathcal{S}(\X)$ consists of the orthogonal transformations that fix the vector $v$.
\end{remark}

Theorem \ref{os2} and Remark \ref{OSexists} showed how to construct an operator stable L\'evy process with any admissible symmetry group.  The group $\mathcal{O}_2^+$ (and groups conjugated to it) were excluded, since they are not maximal (see Section \ref{sec:oss}).  This raises a question, is it possible to have $\mathcal{S}(\X)=\mathcal{O}_2^+$ for some operator self-similar (not L\'evy) processes? The answer is affirmative, as shown in the following example.

\begin{example}\label{O+}
Consider a complex valued process
$$
X(t) = t^{\beta} \exp \left( i(\Theta + \log t) \right),  \quad t>0,
$$
where $\beta>0$, $\Theta$ is a uniform random variable on $[0,2\pi]$ and  $X(0)=0$. Since for any $\phi \in \R$
$$
\{e^{i\phi}X(t)\}_{t\ge 0} \eqfd \{X(t)\}_{t\ge 0},
$$
$\X$ as a process in $\R^2$,
$$
X(t)= t^{\beta} \left[\begin{array}{c}\cos(\Theta + \log t) \\ \sin(\Theta + \log t)\end{array}\right]
$$
is a self-similar with index $\beta$ and $\mathcal{O}_2^+ \subset \mathcal{S}(\X)$.   By \eqref{exposs}, $I$ and $B_2$ are exponents of $\X$ ($B_2$ with $b=\beta$ and arbitrary $c$). If $A \in \mathcal{S}(\X)$ then
$$
AX(1) \dlim X(1)
$$
which implies $A \in \mathcal{O}_2$. Thus
$$
\mathcal{O}_2^+ \subset \mathcal{S}(\X) \subset \mathcal{O}_2.
$$
Consider the process $\{F_0 X(t)\}_{t\ge 0}$, where $F_0$ is the reflexion with respect to the $x$-axis,
\begin{align*}
F_0 X(t) = t^{\beta} \left[\begin{array}{c} \cos(\Theta + \log t)  \\ -\sin(\Theta + \log t) \end{array}\right].
\end{align*}
If $F_0 \in \mathcal{S}(\X)$, then for $t_1=1$ and $t_2=e^{\pi/2}$ we would have
\begin{align*}
(F_0X(1), F_0X(e^{\pi/2})) \dlim (X(1), X(e^{\pi/2})),
\end{align*}
or
\begin{align*}
 \left( \left[\begin{array}{c}\cos \Theta \\ -\sin \Theta\end{array}\right] , e^{\beta\pi/2}\left[\begin{array}{c}- \sin \Theta \\-\cos \Theta \end{array}\right]\right) \dlim
  \left( \left[\begin{array}{c}\cos \Theta \\ \sin \Theta\end{array}\right] , e^{\beta\pi/2}\left[\begin{array}{c}- \sin \Theta \\ \cos \Theta \end{array}\right]\right).
\end{align*}
This equality written in $\R^4$ means
$$
(\cos \Theta, -\sin \Theta, -\sin \Theta, -\cos \Theta) \dlim (\cos \Theta, \sin \Theta, -\sin \Theta, \cos \Theta),
$$
which is impossible since the sum of the first and the fourth random variables on the left hand side is $0$, while on the right hand side is $2\cos \Theta$. Hence $F_0 \notin \mathcal{S}(\X)$, which yields $\mathcal{S}(\X) = \mathcal{O}_2^+$.
\end{example}

\begin{remark}\label{diag-action}
Example \ref{O+} is consistent with the result that symmetry groups of probability measures must be maximal \cite[Theorem 2]{Meerschaert95}, even though $\mathcal{O}_2^+$ is not a maximal subgroup of $\mathrm{GL}(\R^{2})$.  This is because, for $A$ in $\mathcal{S}(\X)$, we not only require $AX(t)$ identically distributed with $X(t)$ for a single $t>0$, but also that $(AX(t_1), \ldots,AX(t_p))$ is identically distributed with $(X(t_1), \ldots,X(t_p))$  for all finite-dimensional distributions.  We say that $\mathcal{O}_2^+$ acts diagonally in this case, and we identify $A$ with corresponding element of $\mathrm{GL}(\R^{2p})$ defined by $(x_1,\ldots,x_p)\mapsto (Ax_1,\ldots,Ax_p)$ for $x_1,\ldots,x_p\in \R^2$.  In Example \ref{O+} the diagonal action of $\mathcal{O}_2^+$ is a maximal subgroup of $\mathrm{GL}(\R^{4})$, see the proof of Theorem 1 in \cite{Meerschaert95}.
\end{remark}

The exponents of a proper operator self-similar process are related to the symmetry group by \eqref{exposs}, there always exists a commuting exponent, and the eigenvalues of any exponent all have positive real part.  These were the crucial facts used in the proof of Theorem \ref{os2}. Hence we can also characterize the symmetry group of a proper operator self-similar process in $\R^2$ in terms of the exponent $B$ in Jordan form.  The proof is identical to Theorem \ref{os2}, except that here we cannot exclude the case where $\Sa(\mu)$ is conjugate to $\mathcal{O}_2^+$, as explained in Remark \ref{diag-action}.

\begin{cor}\label{os2'}
Let $\X=\{X(t)\}_{t\ge 0}$ be a proper operator self-similar process in $\R^2$ with an exponent $B$  given in the Jordan form \eqref{Bi}. Then the statements (i)--(iii)
%(i)--(iv)
of Theorem \ref{os2} hold verbatim after replacing $\Sa(\mu)$ by $\mathcal{S}(\X)$ and including $\mathcal{O}_2^+$ as a possible symmetry group in (ii)
% and (iv).
\end{cor}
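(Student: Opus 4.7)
The plan is to mirror the proof of Theorem \ref{os2} line by line, leveraging the three structural facts about proper operator self-similar processes that exactly parallel those used there: the exponent-symmetry relation $\mathcal{E}(\X) = B + T\mathcal{S}(\X)$ from \eqref{exposs}, the existence of a commuting exponent $B_c$ with $AB_c = B_c A$ for every $A \in \mathcal{S}(\X)$ (Maejima's theorem), and the positivity of the real parts of the eigenvalues of any exponent. Since $\mathcal{S}(\X)$ is a compact subgroup of $\mathrm{GL}(\R^2)$, it embeds into the centralizer $C(B_c)$ just as in the operator stable case, and the Jordan-form dichotomy used to organize the proof of Theorem \ref{os2} applies without change.

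For the case where $\mathcal{S}(\X)$ is finite, I would take $B_c = B_i$ and compute $C(B_i)$ explicitly for $i=1,2,3$. Compactness then forces the relevant entries to have modulus one, reproducing exactly the same finite subgroups $\mathcal{C}_n$, $\mathcal{D}_n$, $\mathcal{D}_1^\ast$ listed in Theorem \ref{os2}. No step of this argument uses that $\X$ is a L\'evy process, so the finite-symmetry conclusions of (i)-(iii) carry over verbatim.

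For the case where $\mathcal{S}(\X)$ is infinite, I would invoke the algebraic fact that $\mathcal{S}(\X) = W^{-1}\mathcal{G} W$ for some symmetric positive definite $W$ and some compact subgroup $\mathcal{G} \subset \mathcal{O}_2$. The crucial modification from Theorem \ref{os2} is this: without the maximality constraint that in the L\'evy case forced $\mathcal{G} = \mathcal{O}_2$, the group $\mathcal{G}$ is now allowed to be either of the two infinite compact subgroups of $\mathcal{O}_2$, namely $\mathcal{O}_2^+$ or $\mathcal{O}_2$. The rest of the argument transfers verbatim: the matrix $WB_c W^{-1}$ must commute with every rotation in $\mathcal{G}$; combining this with $B - B_c \in W^{-1}\mathcal{Q}_2 W$ and comparing eigenvalues against the Jordan form of $B_i$ eliminates $i=1$ and $i=3$, leaving only $B = B_2$; matching $R_\psi = W^{-1}R_\phi W$ and using that $W$ is symmetric positive definite with a scalar commutant then forces $W$ to be a scalar multiple of the identity, so that $\mathcal{S}(\X) = \mathcal{G}$ itself equals either $\mathcal{O}_2^+$ or $\mathcal{O}_2$.

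The main conceptual point to justify is why the case $\mathcal{G} = \mathcal{O}_2^+$, which was ruled out for operator stable laws by \cite[Theorem 2]{Meerschaert95}, is now genuinely available. This is precisely the content of Remark \ref{diag-action}: elements of $\mathcal{S}(\X)$ act diagonally on joint distributions and are therefore required to be maximal only as subgroups of some $\mathrm{GL}(\R^{2p})$, not of $\mathrm{GL}(\R^2)$. Example \ref{O+} exhibits an explicit operator self-similar process realizing $\mathcal{S}(\X) = \mathcal{O}_2^+$, confirming that this extra case in (ii) cannot be excluded and completing the classification.
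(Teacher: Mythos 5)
Your proposal is correct and follows essentially the same route as the paper, which simply observes that the proof of Theorem \ref{os2} rests only on the relation \eqref{exposs}, the existence of a commuting exponent, and the positivity of the real parts of the eigenvalues of the exponent --- all of which hold for proper o.s.s.\ processes --- with the sole change that maximality can no longer be invoked to rule out $\mathcal{G}=\mathcal{O}_2^+$ in the infinite case. Your additional observations (that the finite-case computation of the centralizers is unchanged, that the eigenvalue comparison still eliminates $B_1$ and $B_3$ and forces $W$ to be scalar so that $\mathcal{S}(\X)=\mathcal{G}$ exactly, and that Example \ref{O+} shows $\mathcal{O}_2^+$ is actually attained) are exactly the points the paper relies on.
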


\begin{remark}\label{fcp}
As a simple extension of the construction in Remark \ref{OSexists}, we can obtain an operator self-similar process in $\R^2$ with any exponent, and any admissible symmetry group.   Take $X(t)$ as in Remark \ref{OSexists} and let $Y(t)=X(T(t))$ where $T(t)$ is a self-similar process (time change) with $T(at)=a^p T(t)$ (e.g., take $T(t)=t^p$).  Then $Y(t)$ is operator self-similar with exponent $D=pB$.  This, together with Example \ref{O+}, also shows that $\mathcal{S}(\X)$ can take every possible form listed in Corollary \ref{os2'}, which therefore provides a complete characterization in $\R^2$ of the possible symmetries of an o.s.s.~process.  An interesting and useful example of a self-similar process $T(t)$ with Hurst index $0<\beta<1$, which is not infinitely divisible or even Markovian, is given by the first passage or hitting time $T(t)=\inf\{u>0: D(u)>t\}$ of a stable subordinator $D(t)$ with $E(e^{-sD(t)})=\exp(-cts^\beta)$.  The process $Y(t)=X(T(t))$ has densities $h(x,t)$ that solve the space-time fractional multiscaling diffusion equation
\[\frac{\partial^\beta h(x,t)}{\partial t^\beta}=Lh(x,t)\]
where $L$ is the generator of the operator stable semigroup, see for example \cite{gADE,Zsolution,RW2d}.  This fractional diffusion equation models contaminant transport in heterogeneous porous media, and the process $Y(t)$ represents the path of a randomly selected contaminant particle.  The order of the time fractional derivative $\beta$ controls particle retention (sticking or trapping) while the exponent of the operator stable process codes the anomalous superdiffusion caused by long particle jumps.
% The following is true if the subspaces in the spectral decomposition are orthogonal:
%In particular, if $Bu=\lambda u$ then $P(|\langle X(t),u\rangle|>r)\approx r^{-\alpha}$ where $1/\alpha=\Re(\lambda)$ and $g(r)\approx  r^{-\alpha}$ means that for $\epsilon>0$ arbitrarily small we have $r^{-\alpha-\epsilon}<g(r)< r^{-\alpha+\epsilon}$ for all $r>0$ sufficiently large, see \cite{Meerschaert01} for complete details.  That is, the component of $X(t)$ in the eigenvector direction has heavy tailed jumps whose tail index $\alpha$ is the reciprocal of the real part of the associated eigenvalue.
Also, the inverse process $T(t)$ is constant on intervals corresponding to jumps of the stable subordinator $D(t)$, the length of which is determined by the stable index $\beta$.  Note that the time change need not be independent of the outer process \cite{coupleCTRW,triCTRW}.  Methods for simulating these non-Markovian subordinated processes have recently been developed by Magdziarz and Weron \cite{Magdziarz2007a} and Zhang et al.~\cite{time-Langevin}.
\end{remark}

\begin{remark}
Any proper operator self-similar process can be transformed to one in which the exponent takes the Jordan form \eqref{Bi} by a simple change of basis.  Corollary \ref{os2'} shows that the Jordan basis renders the symmetries orthogonal, and then the Jordan form determines which symmetries are possible.  % Since exponents are generally not unique, an important open question in the theory of operator self-similar processes is to determine the ``best'' 
% exponent to parameterize the process.  In view of Corollary \ref{os2'}, we can resolve this question in $\R^2$ as we did for operator stable L\'evy 
% processes:  If the symmetry group is discrete, take the unique commuting exponent.  Otherwise, take $B=bI$.
\end{remark}

\end{document}